\def\@seccntDot{.}
\def\@seccntformat#1{\csname the#1\endcsname\@seccntDot\hskip 0.5em}
\renewcommand\section{\@startsection{section}{1}{\z@}%
{18\p@ \@plus 6\p@ \@minus 3\p@}%
{9\p@ \@plus 6\p@ \@minus 3\p@}%
{\large\bfseries\boldmath}}
\renewcommand\subsection{\@startsection{subsection}{2}{\z@}%
{12\p@ \@plus 6\p@ \@minus 3\p@}%
{3\p@ \@plus 6\p@ \@minus 3\p@}%
{\bfseries\boldmath}}
\renewcommand\subsubsection{\@startsection{subsubsection}{3}{\z@}%
{12\p@ \@plus 6\p@ \@minus 3\p@}%
{\p@}%
{\bfseries\boldmath}}
\theoremstyle{plain}
\newtheorem{theorem}{Theorem}[section]
\newtheorem{lemma}{Lemma}[section]
\newtheorem{proposition}{Proposition}[section]
\newtheorem{problem}{Problem}[section]
\newtheorem{conjecture}{Conjecture}[section]
\theoremstyle{definition}
\newtheorem{remark}{Remark}[section]
\newtheorem{example}{Example}[section]
\newtheorem{claim}{Claim}[section]
\numberwithin{equation}{section}
\DeclareMathOperator{\ex}{ex}
\DeclareMathOperator{\spex}{spex}
\DeclareMathOperator{\supp}{supp}
\newcommand{\imgi}{{\bf i}}
\newcommand{\e}{{\bf e}}
\title{Variants of spectral Tur\'an theorems and eigenvectors of graphs}
\author{Lele Liu\footnote{School of Mathematical Sciences, Anhui University, Hefei 230601, 
P.R. China. E-mail: \texttt{liu@ahu.edu.cn} (L. Liu). Supported by the National
Nature Science Foundation of China (No. 12001370)}
~~ and ~~
Bo Ning\footnote{Corresponding author. College of Computer Science, Nankai University, Tianjin 300350, P.R. China.
E-mail: \texttt{bo.ning@nankai.edu.cn} (B. Ning). Partially supported by the National Nature Science
Foundation of China (No. 11971346).}}
\date{}
\begin{document}
\maketitle

\begin{abstract}
In 2002, Nikiforov proved that for an $n$-vertex graph $G$ with clique number $\omega$ 
and edge number $m$, the spectral radius $\lambda(G)$ satisfies $\lambda (G) \leq \sqrt{2(1 - 1/\omega) m}$, 
which confirmed a conjecture implicitly suggested by Edwards and Elphick. In this paper, 
we prove a local version of spectral Tur\'an inequality, which states that 
$\lambda^2(G)\leq 2\sum_{e\in E(G)}\frac{c(e)-1}{c(e)}$, where $c(e)$ is the order of 
the largest clique containing the edge $e$ in $G$. We also characterize the extremal graphs. 
We prove that our theorem implies Nikiforov's theorem and give an example to show that 
the difference of Nikiforov's bound and ours is $\Omega (\sqrt{m})$ for some cases. 
Additionally, we establish a spectral counterpart to Ore's problem (1962) which asks for 
the maximum size of an $n$-vertex graph such that its complement is connected 
and does not contain $F$ as a subgraph. Our result leads to a new spectral Tur\'an 
inequality applicable to graphs with connected complements. Finally, we disprove a conjecture 
of Gregory, asserting that for a connected $n$-vertex graph $G$ with chromatic number 
$k\geq 2$ and an independent set $S$, we have 
\[ 
\sum_{v\in S} x_v^2 \leq \frac{1}{2} - \frac{k-2}{2\sqrt{(k-2)^2 + 4(k-1)(n-k+1)}},
\]
where $x_v$ is the component of the Perron vector of $G$ with respect to 
the vertex $v$. A modified version of Gregory's conjecture is proposed.
\par\vspace{2mm}

\noindent{\bfseries Keywords:} Spectral Tur\'an problem; Local spectral Tur\'an problem; 
Spectral radius; Spectral inequality
\par\vspace{2mm}

\noindent{\bfseries AMS Classification:} 05C35; 05C50; 15A18
\end{abstract}

\section{Notation}

Given a graph $G$ of order $n$, the \emph{adjacency matrix} $A(G)$ of $G$ is an $n$-by-$n$ matrix 
whose $(i,j)$-entry is equal to $1$ if the vertices $i$ and $j$ are adjacent, and $0$ otherwise. 
Let $\lambda_1\geq \lambda_2\geq\cdots \geq \lambda_n$ be all eigenvalues of $A(G)$.
The \emph{spectral radius} of $G$ is the largest eigenvalue of $A(G)$, denoted by $\lambda (G)$.
Then $\lambda(G)=\lambda_1$. The Perron--Frobenius theorem for nonnegative matrices implies 
that $A(G)$ of a connected graph $G$ has a unique positive eigenvector of unit length corresponding 
to $\lambda(G)$, and this eigenvector is called the \emph{Perron vector} of $A(G)$. 
Throughout this paper, let $\chi(G)$ and $\omega(G)$ be the chromatic number and clique number 
of $G$, respectively, and $\chi$, $\omega$ in short. For a subset $X$ of the vertex set $V(G)$ 
of a graph $G$, we let $G[X]$ be the subgraph of $G$ induced by $X$, and denote by $e(X)$ 
the number of edges in $G[X]$. We also use $e(G)$ to denote the number of edges of $G$. 
As usual, for a vertex $v$ of $G$ we write $d_G(v)$ and $N_G(v)$ for the degree of $v$ 
and the set of neighbors of $v$ in $G$, respectively. If the underlying graph $G$ is 
clear from the context, simply $d(v)$ and $N(v)$. For graph notation 
and terminology undefined here, we refer the reader to \cite{Bondy-Murty2008}.

\section{A brief introduction to inequalities of spectral radius}

A complete characterization of bipartite graphs is that they contain no odd cycle. 
In spectral graph theory, one of the best-known theorems is that a graph containing 
at least one edge is bipartite if and only the spectrum is symmetric with respect 
to the zero point (see for example, Theorem 3.11 in \cite{Cvetkovi-Doob-Sachs1995}). 
This was largely extended by Hoffman \cite{Hoffman1970} to that $\chi\geq 1+\frac{\lambda_1}{-\lambda_n}$. 
Another spectral inequality but generally non-compared with Hoffman's bound is due 
to Cvetkovi\'c \cite{Cvetkovi1972}, which states $\chi\geq 1+\frac{\lambda_1}{n-\lambda_1}$. 
For certain graphs (e.g., $K_{10}$ minus an edge), Cvetkovi\'c's bound is better. 
By introducing the size $m$ of $G$, Edwards and Elphick \cite{Edwards-Elphick1983} 
proved that $\chi\geq 1+\frac{\lambda^2_1}{2m-\lambda^2_1}$ which is stronger than 
Cvetkovi\'c's bound.  

Another direction on the study of spectral inequality is to determine the spectral 
radius of a graph forbidden certain type of subgraphs. In 1970, Nosal \cite{Nosal1970} 
proved that any triangle-free graph $G$ satisfies that $\lambda(G)\leq \sqrt{m}$.
This theorem was extended to the general case by Nikiforov \cite{Nikiforov2002,Nikiforov2009}, which 
confirmed a conjecture originally due to Edwards and Elphick \cite{Edwards-Elphick1983}. Nikiforov's 
theorem is equivalent to that $\omega\geq 1+\frac{\lambda^2_1}{2m-\lambda^2_1}$, which is stronger 
than Edwards-Elphick bound, in views of the basic inequality of $\chi\geq \omega$. We write Nikiforov's 
bound in another form.

\begin{theorem}[Nikiforov \cite{Nikiforov2002,Nikiforov2009}]\label{thm:Spectral-radius-clique}
Let $G$ be a graph with $m$ edges. Then
\begin{equation}\label{eq:Spectral-radius-clique}
\lambda (G) \leq \sqrt{2\Big(1 - \frac{1}{\omega}\Big) m}.
\end{equation}
Moreover, equality holds if and only if $G$ is a complete bipartite graph for $\omega = 2$, 
or a complete regular $\omega$-partite graph for $\omega \geq 3$ with possibly some isolated vertices.
\end{theorem}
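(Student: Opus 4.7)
My plan is to combine the Rayleigh variational characterisation of $\lambda(G)$ with Cauchy--Schwarz on the edge set and the classical Motzkin--Straus theorem, which asserts that for any graph $H$ and any vector $\mathbf{z}$ on the simplex $\{\mathbf{z}\geq 0:\sum_v z_v=1\}$ one has
$$\sum_{uv\in E(H)} z_u z_v \;\leq\; \tfrac{1}{2}\bigl(1-1/\omega(H)\bigr).$$
Since isolated vertices affect neither $\lambda$, $m$, nor $\omega$, I may remove them and assume $G$ is connected, so that $A(G)$ has a positive unit Perron vector $\mathbf{x}=(x_v)$.

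The inequality itself is then a two-step estimate. Cauchy--Schwarz on the $m$ edges gives
$$\lambda^2 \;=\; \Bigl(\sum_{uv\in E(G)} 2x_u x_v\Bigr)^2 \;\leq\; m\cdot \sum_{uv\in E(G)} 4x_u^2 x_v^2,$$
and because $z_v := x_v^2$ lies in the simplex, Motzkin--Straus yields $\sum_{uv\in E(G)} x_u^2 x_v^2\leq \tfrac{1}{2}(1-1/\omega)$. Combining, $\lambda^2\leq 2(1-1/\omega)m$, which is \eqref{eq:Spectral-radius-clique}.

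The equality analysis is where the real work lies, and I expect it to be the main obstacle. Saturation of Cauchy--Schwarz forces the product $x_u x_v$ to be constant along every edge; together with the eigenequation $\lambda x_v=\sum_{u\sim v}x_u$ and a distance-parity argument, this pins $\mathbf{x}$ down to having either a single value (so $G$ is regular) or two values on two sides of a bipartition (so $G$ is biregular bipartite). Saturation of Motzkin--Straus with $\mathbf{z}=(x_v^2)$ further forces $\supp(\mathbf{x})$ to realise a complete $\omega$-partite clique blow-up. In the regular case this combines with Tur\'an's theorem to give the balanced Tur\'an graph $K_\omega(n/\omega,\ldots,n/\omega)$ (requiring $\omega\mid n$, with the balanced bipartite graph $K_{k,k}$ included in the $\omega=2$ subcase); in the biregular bipartite case, necessarily $\omega=2$, the Motzkin--Straus equality forces every cross-edge to be present, giving $G=K_{a,b}$ with possibly $a\neq b$. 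The subtlety is that Motzkin--Straus admits many maximisers, so the Cauchy--Schwarz rigidity on the Perron vector must be used in tandem with it to prune down to exactly the extremal families in the theorem statement; I would treat $\omega=2$ (where unbalanced parts are allowed) and $\omega\geq 3$ (where the parts must be balanced) as separate subcases.
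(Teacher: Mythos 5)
Your proof takes a genuinely different route from the paper's. The paper obtains Theorem \ref{thm:Spectral-radius-clique} as an immediate corollary of its stronger local result, Theorem \ref{Thm:Main}: since $c(e)\le\omega$ for every edge, $\sum_{e}\frac{c(e)-1}{c(e)}\le (1-1/\omega)m$, and the local inequality then reads $\lambda^2\le 2(1-1/\omega)m$. You instead prove the bound directly, by Cauchy--Schwarz on the Perron vector together with the unweighted Motzkin--Straus theorem applied to $z_v=x_v^2$; this is the classical argument and is perfectly valid --- in fact the paper's Theorem \ref{Thm:Main} is exactly a weighted refinement of your two-step estimate (it replaces the uniform factor $1-1/\omega$ with the per-edge factor $\frac{c(e)-1}{c(e)}$, at the cost of proving a weighted Motzkin--Straus-type lemma). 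Your route is shorter and self-contained; the paper's route buys the sharper local bound. Two small cautions on your equality analysis. First, removing isolated vertices does \emph{not} make $G$ connected; you should reduce to a single nontrivial component by noting that $\lambda(G)=\max_i\lambda(G_i)$ while $m=\sum_i m_i$ and $\omega=\max_i\omega(G_i)$, so equality forces all but one component to be edgeless, and only then is the Perron vector positive on the whole (reduced) vertex set so that $\supp(\bm{x})=V(G)$ and Motzkin--Straus rigidity applies to all of $G$. Second, Tur\'an's theorem is not needed in the $\omega\ge 3$ subcase: once $G$ is complete $\omega$-partite and all Perron entries are equal (which follows from $x_u x_v$ being constant on edges plus the presence of an odd cycle), a vertex in class $V_i$ has degree $n-|V_i|$, and regularity immediately balances the parts.
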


Nikiforov \cite{Nikiforov2020} pointed out that Theorem \ref{thm:Spectral-radius-clique} together with 
Tur\'an's theorem implies Stanley's inequality \cite{Stanley1987} that $\lambda(G)\leq -1/2+\sqrt{2m+1/4}$. 
It is noteworthy to highlight that Nikiforov's inequality implies the concise form of Tur\'an 
theorem \cite{Turan1961} and also Wilf's inequality \cite{Wilf1986}. Moreover, the first observation 
was pointed out by Aouchiche and Hansen earlier (see \cite[pp.~2297]{Aouchiche-Hansen2010}).

There are several conjectures which tried to extend Nikiforov's theorem. For details, see a 
conjecture of Bollob\'as-Nikiforov \cite[Conjecture~1]{Bollobas-Nikiforov2007} and a conjecture 
of Elphick-Linz-Wocjan \cite[Conjecture~2]{Elphick-Linze-Wocjan2023}. For advancement on the 
first conjecture, see \cite{Lin-Ning-Wu2021,Zhang2023}. For more related open problems and 
history, see Topics 1 and 2 in \cite{Liu-Ning2023}. However, Nikiforov's inequality 
(i.e., Theorem \ref{thm:Spectral-radius-clique}) is still the strongest one in some sense till now.

\section{A local version of spectral Tur\'an theorem}

The goal of this section is to prove a new spectral inequality stronger than Nikiforov's inequality. 
Before stating our result, we need some additional notation. Let $\triangle^{n-1}$ be the standard simplex:
\[
\triangle^{n-1}: = \bigg\{(x_1,x_2,\ldots,x_n) \in\mathbb{R}^n : x_i \geq 0, i\in [n] \ \text{and}\ \sum_{i=1}^n x_i = 1\bigg\}.
\]
Let $G$ be a graph on $n$ vertices with clique number $\omega (G)$. The well-known Motzkin--Straus 
Theorem \cite{Motzkin-Straus1965} states that
\begin{equation}\label{eq:Motzkin-Straus-inequality}
\max\{L_G(\bm{z}): \bm{z}\in\triangle^{n-1}\} = 1 - \frac{1}{\omega (G)},
\end{equation}
where $L_G(\bm{z}) = \bm{z}^{\mathrm{T}} A(G) \bm{z}$. Equation \eqref{eq:Motzkin-Straus-inequality} 
established a remarkable connection between the clique number and the Lagrangian of a graph. In fact, 
the connection can be employed to give a new proof of the fundamental theorem of Erd\H{o}s-Stone-Simonovits 
on Tur\'an densities of graphs \cite{Keevash2011}.

Additionally, Motzkin and Straus also gave a characterization on the equality 
$L_G(\bm{z}) = 1-1/\omega(G)$. Given a vector $\bm{x}$, the \emph{support} of 
$\bm{x}$, denoted by $\supp (\bm{x})$, is the set consisting of all indices 
corresponding to nonzero entries in $\bm{x}$.

\begin{proposition}[\cite{Motzkin-Straus1965}]\label{prop:MS-equality}
Let $G$ be an $n$-vertex graph with $\omega(G)=\omega$, and $\bm{z}\in\triangle^{n-1}$. 
Then $L_G(\bm{z})=1-1/\omega$ if and only if the induced subgraph of $G$ on 
$\supp (\bm{z})$ is a complete $\omega$-partite graph.
\end{proposition}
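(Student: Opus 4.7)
The plan is to follow the classical Motzkin--Straus weight-shifting argument, using \eqref{eq:Motzkin-Straus-inequality} as input. For the ``$\Leftarrow$'' direction (understood with the standard convention that $\bm z$ assigns total mass $1/\omega$ to each part), if $V_1,\ldots,V_\omega$ are the parts of $G[\supp(\bm z)]$ and $s_j=\sum_{i\in V_j}z_i$, then
\[
L_G(\bm z)=\sum_{j<\ell}2 s_j s_\ell = 1-\sum_j s_j^2 = 1-\frac{1}{\omega},
\]
as desired.

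For the main ``$\Rightarrow$'' direction I would proceed in three steps. First, since $\bm z$ maximizes $L_G$ on $\triangle^{n-1}$, the KKT conditions provide a Lagrange multiplier $\mu$ with $(A(G)\bm z)_i=\mu$ for $i\in S:=\supp(\bm z)$ and $(A(G)\bm z)_i\le\mu$ for $i\notin S$; pairing with $\bm z$ gives $\mu=L_G(\bm z)=1-1/\omega$. Second, I would show that non-adjacency is transitive on $S$, so that $G[S]$ is complete multipartite. Suppose for contradiction that $a,b,c\in S$ satisfy $ab,bc\notin E(G)$ and $ac\in E(G)$, and for $\epsilon\in(0,z_b)$ set $\bm z'=\bm z+\epsilon(\e_a-\e_b)$. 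Since $ab\notin E(G)$ the quadratic term $-2\epsilon^2 A_{ab}$ vanishes, and $(A\bm z)_a=(A\bm z)_b=\mu$ kills the linear part, so $L_G(\bm z')=L_G(\bm z)=1-1/\omega$ and $\bm z'$ is optimal as well. A direct calculation then yields $(A\bm z')_b=\mu$ while $(A\bm z')_c=\mu+\epsilon$, contradicting the KKT condition for $\bm z'$, which requires $(A\bm z')_b=(A\bm z')_c$ since both $b$ and $c$ lie in $\supp(\bm z')$.

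Third, let $V_1,\ldots,V_k$ be the color classes of $G[S]$ as a complete multipartite graph and $s_j=\sum_{i\in V_j}z_i$, so that
\[
1-\frac{1}{\omega}=L_G(\bm z)=1-\sum_{j=1}^k s_j^2 \le 1-\frac{1}{k}
\]
by Cauchy--Schwarz, giving $k\ge\omega$; conversely, selecting one vertex from each $V_j$ exhibits a $K_k\subseteq G$, forcing $k\le\omega$, and hence $k=\omega$. The main obstacle I anticipate is the weight-transfer step: $\epsilon$ must be chosen small enough that $\bm z'$ remains in the simplex with $b$ still in its support, so that applying the KKT conditions to the new optimizer $\bm z'$ at the pair $(b,c)$ produces the strict inequality that closes the contradiction.
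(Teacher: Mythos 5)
Your argument is correct. The paper itself does not prove Proposition~\ref{prop:MS-equality} --- it simply cites Motzkin--Straus (1965) --- so there is no in-text proof to compare against; but the weight-shifting technique in your ``$\Rightarrow$'' direction (perturb along $\bm{z}\mapsto\bm{z}+\epsilon(\bm{e}_a-\bm{e}_b)$ for a non-edge $ab$, use that the objective value and the stationarity condition $(A\bm z)_a=(A\bm z)_b$ are preserved, and then extract a contradiction from stationarity of the new optimizer at a third vertex $c$) is exactly the idea the paper re-deploys for the weighted form $F_G$ in Claims~\ref{claim:support-clique} and~\ref{claim:eigenequation}, so your route is entirely consonant with the surrounding machinery. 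The $\epsilon$-worry you flag at the end is already disposed of by your own choice $\epsilon\in(0,z_b)$, and the closing step $k=\omega$ (power-mean for $k\ge\omega$, a transversal $K_k$ for $k\le\omega$) is fine.

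The genuinely valuable thing you noticed is that the ``$\Leftarrow$'' direction, read literally, is false: for $G=K_2$, $\omega=2$, and $\bm z=(3/4,1/4)\in\triangle^1$, the support induces a complete $2$-partite graph, yet $L_G(\bm z)=3/8\ne 1/2$. The Motzkin--Straus characterization of maximizers additionally requires that each colour class carry total mass $1/\omega$, which is precisely the ``standard convention'' you quietly inserted --- so what you prove is the corrected proposition, not the one on the page. This imprecision is not cosmetic, since the paper later invokes this $\Leftarrow$ direction inside the backward implication of Claim~\ref{claim:max-Fz}; there the equal-mass condition is being assumed tacitly, and a careful reader should add it back in. In short: correct proof, standard technique, and a legitimate catch on the statement itself.
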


For a graph $G$ and an edge $e$ of $G$, let $c(e)$ denote the order of the largest 
clique containing $e$ in $G$. The main result of this section manifests in the following theorem.

\begin{theorem}\label{Thm:Main}
Let $G$ be a graph with spectral radius $\lambda(G)$ and $\omega(G)=\omega\geq 2$. Then 
\begin{equation}\label{eq:local-bound}
\lambda(G) \leq \sqrt{2 \sum_{e\in E(G)} \frac{c(e) - 1}{c(e)}}.
\end{equation}
Equality holds if and only if $G$ is a complete bipartite graph for $\omega = 2$, or a 
complete regular $\omega$-partite graph for $\omega \geq 3$ \textup{(}possibly with some isolated vertices\textup{)}.
\end{theorem}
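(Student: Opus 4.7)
The plan is to adapt Nikiforov's Cauchy--Schwarz plus Motzkin--Straus proof of \eqref{eq:Spectral-radius-clique} by inserting edge-dependent weights that record the local clique number $c(e)$. Let $\bm{x}$ be a unit Perron eigenvector of $G$, so that $\lambda(G) = 2\sum_{uv \in E(G)} x_u x_v$, and set $y_v := x_v^2$, so that $\bm{y} \in \triangle^{n-1}$. Applying Cauchy--Schwarz with edge weights $w_e = (c(e)-1)/c(e)$ to the identity for $\lambda(G)$ gives
\[
\lambda(G)^2 \;\leq\; 4\biggl(\sum_{e \in E(G)} \frac{c(e)-1}{c(e)}\biggr)\biggl(\sum_{uv \in E(G)} \frac{c(uv)}{c(uv)-1}\, y_u y_v\biggr),
\]
so the theorem reduces to the following local weighted Motzkin--Straus bound:
\[
f(\bm{y}) := \sum_{uv \in E(G)} \frac{c(uv)}{c(uv)-1}\, y_u y_v \;\leq\; \tfrac{1}{2} \qquad \text{for every } \bm{y} \in \triangle^{n-1}.
\]

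To prove this, I would pick a maximizer $\bm{y}^{\ast}$ of $f$ whose support $S$ is of minimum cardinality. If $u,v \in S$ were non-adjacent in $G$, then $f$ contains no $y_u y_v$ term, so $\bm{y}(t) := \bm{y}^{\ast} + t(\bm{e}_u - \bm{e}_v)$ changes $f$ only linearly in $t$; by optimality the linear coefficient vanishes, and pushing $t$ to an extreme of $[-y_u^{\ast},\, y_v^{\ast}]$ would yield another maximizer of strictly smaller support, a contradiction. Hence $S$ is a clique of some size $k$. Every edge $uv$ of $G[S]$ then satisfies $c(uv) \geq k$, so $\tfrac{c(uv)}{c(uv)-1} \leq \tfrac{k}{k-1}$, and applying Motzkin--Straus \eqref{eq:Motzkin-Straus-inequality} to $G[S] = K_k$ gives
\[
f(\bm{y}^{\ast}) \;\leq\; \frac{k}{k-1}\sum_{uv \in E(K_k)} y_u^{\ast} y_v^{\ast} \;\leq\; \frac{k}{k-1}\cdot \frac{k-1}{2k} \;=\; \tfrac{1}{2}.
\]

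For the equality case, equality in \eqref{eq:local-bound} requires simultaneously (i) equality in the weighted Cauchy--Schwarz, i.e.\ $\tfrac{c(uv)}{c(uv)-1}\, x_u x_v$ is constant on $E(G)$, and (ii) $f(\bm{y}) = \tfrac{1}{2}$. Isolated vertices are immaterial for both sides, and any extra component carrying an edge would strictly increase the right-hand side without affecting $\lambda(G)$, so I may assume $G$ is connected; then $\bm{x} > 0$ and $\supp(\bm{y}) = V(G)$. Tracing (ii) through the support-reduction together with Proposition~\ref{prop:MS-equality} forces $G$ itself to be complete $\omega$-partite. Condition (i) then demands that the Perron products $x_u x_v$ be constant across edges, which is automatic for $\omega = 2$ (any $K_{p,q}$) but forces the parts to have equal size when $\omega \geq 3$, giving precisely the extremal graphs in the statement.

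The main obstacle is the weighted Motzkin--Straus bound $f(\bm{y}) \leq \tfrac{1}{2}$: one must verify that the support-reduction argument still runs with the heterogeneous weights $c(uv)/(c(uv)-1)$ and that after reducing to a clique $S$ of size $k$ these weights are uniformly dominated by $k/(k-1)$, which is exactly the factor required to absorb the classical Motzkin--Straus constant $(k-1)/(2k)$ on $K_k$. The equality analysis is then a careful but essentially routine unwinding using Cauchy--Schwarz equality and Proposition~\ref{prop:MS-equality}.
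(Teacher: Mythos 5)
Your proposal follows essentially the same route as the paper: the same Cauchy--Schwarz reduction to the weighted Motzkin--Straus bound $f(\bm{y})\leq \tfrac12$, the same support-reduction argument to a clique $S$ with $c(uv)\geq |S|$ on every edge, and the same equality analysis via constancy of $\frac{c(e)}{c(e)-1}x_ux_v$ over edges plus the equality case of the weighted Motzkin--Straus inequality. The one place you compress more than the paper is the claim that $f(\bm{y})=\tfrac12$ forces $G[\supp(\bm{y})]$ to be complete $\omega$-partite; the paper devotes a genuine induction (its Claim~\ref{claim:max-Fz}, using Claim~\ref{claim:eigenequation}) to this, and it is not quite as ``routine'' as your closing sentence suggests, but your strategy is the correct one.
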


\begin{proof}
We prove the assertion by a weighted Motzkin--Straus inequality. To this end, for any vector 
$\bm{x}\in\triangle^{n-1}$ we define 
\begin{equation}\label{eq:Fx}
F_G(\bm{x}):= \bm{x}^{\mathrm{T}} W\bm{x},
\end{equation}
where $W$ is an $n\times n$ matrix whose $(i,j)$-entry is $\frac{c(e)}{c(e) - 1}$ if 
$e = ij \in E(G)$, and $0$ otherwise. We first show that $F(\bm{x})$ is maximized at some 
point supported on a clique of $G$. This result has been proved in \cite{Bradac2022}, 
and \cite{Liu-Reiher2021} in the context of a related setting. For the sake of 
completeness, we provide a proof here.

\begin{claim}\label{claim:support-clique}
Let $\bm{x}\in\triangle^{n-1}$ be a vector such that 
$F_G(\bm{x}) = \max\{F_G(\bm{z}): \bm{z}\in\triangle^{n-1}\}$ with minimal supporting set.
Then $\supp (\bm{x})$ induces a clique in $G$.
\end{claim}

\noindent\textit{Proof of Claim \ref{claim:support-clique}}.
Suppose by contradiction that there exists $\bm{x}\in\triangle^{n-1}$ such that 
$F_G(\bm{x}) = \max\{F_G(\bm{z}): \bm{z}\in\triangle^{n-1}\}$ and $\bm{x}$ has a 
minimal supporting set, but there exist $i\neq j$ such that $ij\notin E(G)$ and 
$x_i, x_j > 0$. 
Without loss of generality, assume that $\bm{x}^{\mathrm{T}} W \bm{e}_i\geq \bm{x}^{\mathrm{T}} W \bm{e}_j$,
where $\bm{e}_i$ and $\bm{e}_j$ are the $i$-th column and $j$-th column of the 
identity matrix of order $n$, respectively. 

Consider the vector $\bm{y} = \bm{x} + x_j(\bm{e}_i - \bm{e}_j)$, and set 
$\bm{e}:= \bm{e}_i - \bm{e}_j$, it is clear that $\bm{y}\in\triangle^{n-1}$. By simple algebra, 
we have 
\[
F_G(\bm{y}) - F_G(\bm{x}) = \bm{y}^{\mathrm{T}} W \bm{y} - \bm{x}^{\mathrm{T}} W \bm{x} 
= 2x_j \bm{x}^{\mathrm{T}} W \bm{e} + x_j^2 \bm{e}^{\mathrm{T}} W \bm{e}. 
\]
Since $ij\notin E(G)$, we see $\bm{e}^{\mathrm{T}} W \bm{e} = 0$. So,
\[
F_G(\bm{y}) - F_G(\bm{x}) = 2x_j\bm{x}^{\mathrm{T}} W \bm{e} = 2x_j \bm{x}^{\mathrm{T}} W (\bm{e}_i - \bm{e}_j) \geq 0.
\]
Hence, $\bm{y}$ is also a vector attaining $\max\{F_G(\bm{z}): \bm{z}\in\triangle^{n-1}\}$.
However, $\supp (\bm{y}) < \supp (\bm{x})$, contradicting our assumption on $\bm{x}$.
\hfill $\diamondsuit$ \par\vspace{2.5mm}

\begin{claim}\label{claim:eigenequation}
Let $\bm{x}\in\triangle^{n-1}$ such that $F_G(\bm{x})=\max\{F_G(\bm{z}): \bm{z}\in\triangle^{n-1}\}$. 
Then $\bm{x}^{\mathrm{T}} W\bm{e}_i = \bm{x}^{\mathrm{T}} W\bm{e}_j$ for any $i,j\in\supp (\bm{x})$.
\end{claim}

\noindent\textit{Proof of Claim \ref{claim:eigenequation}}.
Let $i,j\in\supp (\bm{x})$. We assume, without loss of generality, towards contradiction that 
$\bm{x}^{\mathrm{T}} W\bm{e}_i > \bm{x}^{\mathrm{T}} W\bm{e}_j$. 
Let $0<\varepsilon<\min\{x_i, x_j\}$. Denote $\bm{y} := \bm{x} + \varepsilon (\bm{e}_i - \bm{e}_j)$. Then
\begin{align*}
F_G(\bm{y}) - F_G(\bm{x}) 
& = 2\varepsilon \bm{x}^{\mathrm{T}} W(\bm{e}_i - \bm{e}_j) + \varepsilon^2 (\bm{e}_i-\bm{e}_j)^{\mathrm{T}} W (\bm{e}_i - \bm{e}_j) \\
& = 2\varepsilon \bm{x}^{\mathrm{T}} W(\bm{e}_i - \bm{e}_j) - 2\varepsilon^2 \bm{e}_i^{\mathrm{T}} W \bm{e}_j.
\end{align*}
Choosing $\varepsilon$ to be sufficiently small gives $F_G(\bm{y}) > F_G(\bm{x})$, 
a contradiction.\hfill $\diamondsuit$ \par\vspace{2.5mm}

\begin{claim}\label{claim:max-Fz}
For each $\bm{z}\in\triangle^{n-1}$, $F_G(\bm{z}) \leq 1$. Equality holds if and only if 
the induced subgraph of $G$ on $\supp (\bm{z})$ is a complete $\omega$-partite graph.
\end{claim}

\noindent\textit{Proof of Claim \ref{claim:max-Fz}}.
Let $\bm{z}$ be a vector satisfying $F_G(\bm{z}) = \max\{F(\bm{y}): \bm{y}\in\triangle^{n-1}\}$ 
with minimal supporting set. By Claim \ref{claim:support-clique}, $\supp (\bm{z})$ forms a 
clique $K$ in $G$. Hence,
\[
F_G(\bm{z}) = 2 \sum_{e\, =\, ij\in E(K)} \frac{c(e)}{c(e) - 1} z_iz_j.
\]
Let $r$ be the order of $K$. Observe that $x/(x-1)$ is a decreasing function in $x$, 
we immediately obtain 
\[
F_G(\bm{z}) \leq \frac{2r}{r - 1} \sum_{ij\in E(K)} z_iz_j
\leq 1,
\]
where the last inequality follows by applying \eqref{eq:Motzkin-Straus-inequality} to the clique $K$.

In the following we characterize the equality of $F_G(\bm{z}) = 1$. If the induced subgraph of 
$G$ on $\supp (\bm{z})$ is a complete $\omega$-partite graph, then $L_G(\bm{z}) = 1-1/\omega$ 
by Proposition \ref{prop:MS-equality}. Since $c(e) \leq\omega$ for each $e\in E(G)$, we have   
\begin{align*}
F_G(\bm{z}) 
& = 2\sum_{ij\in E(G)} \frac{c(e)}{c(e)-1} z_iz_j \\
& \geq \frac{2\omega}{\omega-1} \sum_{ij\in E(G)} z_iz_j \\
& = \frac{\omega}{\omega-1} \cdot L_G(\bm{z}) \\
& = 1,
\end{align*}
which yields that $F_G(\bm{z})=1$.

For the other direction, let $F_G(\bm{z})=1$, we will show that the induced subgraph of $G$ 
on $\supp (\bm{z})$ is a complete $\omega$-partite graph by induction on the order $n$ of $G$. 
The case of $n=2$ is clear. Now let $n\geq 3$. If $G[\supp(\bm{z})]$ is a complete graph, then 
the assertion is true. So we assume $v_1,v_2\in\supp(\bm{z})$ and $v_1v_2\notin E(G)$. 
Let $G':=G-v_1$, and define a vector $\bm{z}'\in\mathbb{R}^{n-1}$ for $G'$ by 
\[
z'_v = 
\begin{cases}
z_v, & v\neq v_2, \\
z_{v_1} + z_{v_2}, & v = v_2.
\end{cases}
\]
Then $F_{G'}(\bm{z}') = F_G(\bm{z}) = 1$ by Claim \ref{claim:eigenequation}. By induction, 
the induced subgraph $H$ of $G'$ on $\supp (\bm{z}')$ is a complete $k$-partite graph, where 
$k=\omega(G')$. Let $V_1,\ldots,V_k$ be the vertex classes of $H$. If $N_G(v_1)\cap V_i \neq\emptyset$ 
for each $i=1,2,\ldots,k$, then $\omega(G) = k+1$. Hence, $F_G(\bm{z}) > 1$, a contradiction.
So, there is some $V_i$, say $V_1$, such that $v_1$ is nonadjacent to any vertices in $V_1$. 
However, since $F_G(\bm{z})=\max\{F_G(\bm{x}): \bm{x}\in\triangle^{n-1}\}$, for each $v\in V_1$ 
we have $\bm{z}^{\mathrm{T}} W\bm{e}_{v_1} = \bm{z}^{\mathrm{T}} W\bm{e}_v$ by Claim \ref{claim:eigenequation}. 
It follows that $v_1$ is adjacent to all vertices in $V_2\cup V_3\cup\cdots\cup V_k$, completing the proof.
\hfill $\diamondsuit$ \par\vspace{2.5mm}

Now, let $\bm{z}$ be the Perron vector of $A(G)$, and $\bm{y}$ be the vector such that 
$y_i = z_i^2$ for $i\in [n]$. Then
\begin{align*}
\lambda (G) 
& = 2 \sum_{ij\in E(G)} z_iz_j = 2 \sum_{ij\in E(G)} \sqrt{y_iy_j} \\
& = 2 \sum_{e\,=\, ij\in E(G)} \sqrt{\frac{c(e) - 1}{c(e)}} \cdot \sqrt{\frac{c(e)}{c(e) - 1} y_iy_j}.
\end{align*}
Using Cauchy--Schwarz inequality and Claim \ref{claim:max-Fz}, we find
\begin{align*}
\lambda^2(G)
& \leq 4 \sum_{e\in E(G)} \frac{c(e) - 1}{c(e)} \cdot \sum_{e\,=\, ij\in E(G)} \frac{c(e)}{c(e) - 1} y_iy_j \\
& = 2 \bigg( \sum_{e\in E(G)} \frac{c(e) - 1}{c(e)} \bigg) \cdot F_G(\bm{y}) \\
& \leq 2 \sum_{e\in E(G)} \frac{c(e) - 1}{c(e)},
\end{align*}
completing the proof of \eqref{eq:local-bound}.

Finally, we characterize all the graphs attaining equality in \eqref{eq:local-bound}. According to 
the proof above, equality in \eqref{eq:local-bound} holds if and only if 
$F_G(\bm{y})=1$ and meantime, for each $e=ij\in E(G)$,
\[ 
y_iy_j = c\left(\frac{c(e)-1}{c(e)}\right)^2 
\]
for some constant $c>0$. By Claim \ref{claim:max-Fz} and $y_i>0$ for any $i\in V(G)$, this equivalent to 

(1). $G$ is a compete $\omega$-partite graph;

(2). $y_iy_j = c'$ for some constant $c'>0$.

\noindent This completes the proof for $\omega=2$.
Let now $\omega\geq 3$, and $G$ be a compete $\omega$-partite graph.
For any $i\neq j$ and $u\in V_i, v\in V_j$, by item (2), we have $y_uy_w=c'=y_vy_w$ for 
any $w\notin (V_i\cup V_j)$. Then $z_u=z_v$, and therefore $|V_1|=|V_2|=\cdots=|V_{\omega}|$, 
completing the proof.
\end{proof}

Our theorem implies Nikiforov's inequality.

\begin{proof}[Proof of Theorem \ref{thm:Spectral-radius-clique}] 
Let $G$ be a graph with $\omega(G)=\omega$. Then $G$ is $K_{\omega+1}$-free, 
and so for any edge $e\in E(G)$, $c(e)\leq \omega$ and
$\frac{c(e)-1}{c(e)}\leq 1-\frac{1}{\omega}$. By Theorem \ref{Thm:Main}, we 
have $\lambda(G)\leq \sqrt{2m\cdot(1-1/\omega})$. The proof is complete.
\end{proof}

Recall that a \emph{kite graph} $Ki_{m,\omega}$ is a graph with $m$ edges obtained 
from a clique $K_{\omega}$ and a path by adding an edge between a vertex from the 
clique and an endpoint from the path.

\begin{example}
Let $\omega\geq 3$ be a fixed integer, and $Ki_{m,\omega}$ be the kite graph with 
clique number $\omega$. By simple algebra,
\[
2\sum_{e\in E(Ki_{m,\omega})} \frac{c(e)-1}{c(e)}
= m + \binom{\omega-1}{2}.
\]
Hence, for $m>\omega\binom{\omega-1}{2}$, the difference between Nikiforov's 
bound \eqref{eq:Spectral-radius-clique} and our bound \eqref{eq:local-bound} is
\begin{align*}
\sqrt{2\Big(1-\frac{1}{\omega}\Big) m} - \sqrt{2\sum_{e\in E(Ki_{m,\omega})} \frac{c(e)-1}{c(e)}} 
& = \sqrt{2\Big(1-\frac{1}{\omega}\Big) m}  - \sqrt{m + \binom{\omega-1}{2}} \\
& > \sqrt{2\Big(1-\frac{1}{\omega}\Big) m}  - \sqrt{\Big(1+\frac{1}{\omega}\Big) m} \\
& = \Omega (\sqrt{m}).
\end{align*}
\end{example}

Concluding this section, we explore an analog of \eqref{eq:Spectral-radius-clique} concerning 
the signless Laplacian spectral radius of graphs. To recap, the signless Laplacian matrix of $G$ 
is defined as $Q(G) := D(G) + A(G)$, where $D(G)$ is the diagonal matrix whose entries are 
the degrees of the vertices of $G$. The largest eigenvalue of $Q(G)$, denoted by $q(G)$, is 
called the \emph{signless Laplacian spectral radius} of $G$. Li, Feng, and Liu \cite{Li-Feng-Liu2022} 
presented the following conjecture.

\begin{conjecture}[\cite{Li-Feng-Liu2022}]\label{conj:q-clique}
Let $G$ be a $K_{r+1}$-free graph with $m$ edges. Then 
\[
q(G) \leq \sqrt{8\Big( 1-\frac{1}{r} \Big) m}.
\]
\end{conjecture}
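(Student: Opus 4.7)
The plan is to lift the weighted Motzkin--Straus machinery developed for Theorem~\ref{Thm:Main} to the signless Laplacian. Let $\bm{x}$ be the Perron eigenvector of $Q(G)$ with $\|\bm{x}\|_2 = 1$, and set $y_i := x_i^2$, so that $\bm{y}\in\triangle^{n-1}$. Expanding $(x_i+x_j)^2 = (y_i+y_j)+2\sqrt{y_iy_j}$, the variational identity $q(G) = \bm{x}^{\mathrm T}Q(G)\bm{x}$ yields the decomposition
\[
q(G) \;=\; \sum_i d_i y_i \;+\; 2\sum_{ij\in E(G)}\sqrt{y_iy_j}.
\]

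The off-diagonal sum is immediately tractable by the same argument used at the end of the proof of Theorem~\ref{Thm:Main}: writing $\sqrt{y_iy_j}=\sqrt{(c(e)-1)/c(e)}\cdot\sqrt{c(e)\,y_iy_j/(c(e)-1)}$ and applying Cauchy--Schwarz together with Claim~\ref{claim:max-Fz} gives
\[
2\sum_{ij\in E(G)}\sqrt{y_iy_j}\;\leq\;\sqrt{2\sum_{e\in E(G)}\frac{c(e)-1}{c(e)}}\;\leq\;\sqrt{2(1-1/r)m}.
\]
If one can further prove the parallel bound $\sum_i d_iy_i\leq\sqrt{2(1-1/r)m}$ for the diagonal part, adding the two contributions would produce exactly $q(G)\leq 2\sqrt{2(1-1/r)m}=\sqrt{8(1-1/r)m}$, and the equality analysis inherited from Theorem~\ref{Thm:Main} would force the extremal graphs to be regular complete $r$-partite, as expected.

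Bounding the diagonal term $\sum_i d_iy_i$ is where the real work lies, and I expect it to be the main obstacle. The crude estimate $\sum_i d_iy_i\leq\Delta(G)$ is too weak, and the eigenequation $q\,x_i=d_ix_i+\sum_{j\sim i}x_j$ only rearranges into the tautology $\sum_i d_iy_i=q(G)-2\sum_{ij\in E}x_ix_j$. A more promising second route is the line-graph identity $q(G)=2+\lambda_1(L(G))$, combined with Theorem~\ref{Thm:Main} applied to $L(G)$, translating $\omega(L(G))$ and $e(L(G))=\tfrac{1}{2}\bigl(\sum_v d_v^2-2m\bigr)$ back into quantities depending on $r$ and the degree sequence of $G$. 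However, either approach interacts poorly with high-degree vertices: on $K_{1,n-1}$ one has $q=n$ whereas $\sqrt{8(1-1/2)(n-1)}=2\sqrt{n-1}$, which already falls below $n$ for $n\geq 3$. Any successful resolution of the conjecture would therefore need more than Motzkin--Straus and Cauchy--Schwarz alone; a natural preliminary step is to investigate whether the correct formulation should replace $m$ by $\sum_v\binom{d_v}{2}$, or impose a regularity or minimum-degree hypothesis. Pinning down this boundary behaviour, rather than any routine Cauchy--Schwarz calculation, is where I expect the proof to live or die.
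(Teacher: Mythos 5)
The conjecture you set out to prove is in fact false, and the paper does not prove it---it disproves it in the remark immediately following the statement. Crucially, the counterexample is exactly the graph you stumble upon at the end of your proposal: the star $K_{1,n-1}$ is $K_3$-free (so $r=2$) with $m=n-1$ edges, has $q(K_{1,n-1})=n$, yet $\sqrt{8(1-1/2)(n-1)}=2\sqrt{n-1}<n$ for every $n\geq 3$. The paper records precisely this (using $S_n$ with $n>8(1-1/r)$), and notes more generally that any $K_{r+1}$-free graph with $\Delta(G)=\Omega(n)$ and $e(G)=o(n^2)$ violates the inequality once $n$ is large, since $q(G)\geq\Delta(G)+1$ grows linearly while $\sqrt{8(1-1/r)m}$ is $o(n)$.

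Where your proposal goes astray is interpretation, not computation. Having verified $q(K_{1,n-1})>\sqrt{8(1-1/r)m}$, you treat it as a symptom that the Motzkin--Straus machinery ``interacts poorly with high-degree vertices'' and as grounds to ``investigate whether the correct formulation should replace $m$,'' as though the conjecture might still be salvageable as stated. It is not: your calculation already refutes it outright, so no bound of the form $\sum_i d_iy_i\leq\sqrt{2(1-1/r)m}$ can hold in general---for the star the diagonal term alone is roughly $n$, which swamps the target. The entire first half of the proposal (the decomposition $q=\sum_i d_iy_i+2\sum_{ij\in E}\sqrt{y_iy_j}$, the Cauchy--Schwarz estimate on the off-diagonal part, the hoped-for parallel diagonal estimate) is effort spent trying to prove something false. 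The correct answer here is simply to exhibit the star as a counterexample and conclude that Conjecture~\ref{conj:q-clique} is false, exactly as the paper does.
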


Conjecture \ref{conj:q-clique} is not true in general. Consider the star $S_n$
on $n$ vertices where $n > 8(1-1/r)$. One can check that 
$q(S_n) = n > \sqrt{8( 1-1/r) n} > \sqrt{8( 1-1/r)e(S_n)}$.
Generally speaking, if $G$ is a $K_{r+1}$-free graph on $n$ vertices 
with $\Delta (G) = \Omega (n)$ and $e(G) = o(n^2)$, then $G$ is always 
a counterexample to Conjecture \ref{conj:q-clique} for sufficiently large $n$.

\section{A variation of spectral Tur\'an problem}

In this section we explore a variation of spectral Tur\'an problem, serving as 
a spectral analogue to Ore's problem.

A graph $G$ satisfying that the complement $\overline{G}$ of $G$ is connected is 
called \emph{co-connected}. Let $\ex_{cc}(n,F)$ denote the maximum size of a 
co-connected graph $G$ on $n$ vertices which does not contain $F$ as a subgraph. 
If the connectedness on $\overline{G}$ is removed, then $\ex_{cc}(n,F)$ becomes 
the Tur\'an number $\ex (n, F)$ of $G$. An old problem of Ore \cite{Ore1962},
proposed in 1962, sought to determine $\ex_{cc}(n, K_{r+1})$. This problem has been settled 
independently by Bougard-Joret \cite{BJ08}, Christophe et al. \cite{Christophe2008}, 
Gitler-Valencia \cite{Gitler-Valencia2013} and Yuan \cite{Y19}. It was proved 
in \cite{BJ08,Christophe2008,Gitler-Valencia2013,Y19} that $\ex_{cc}(n,K_{r+1}) = e(T_r(n)) - r + 1$ 
for $n\geq r+1$. For some advancement on a similar problem on matchings, see \cite{Shi-Ma2024}.

As a spectral counterpart, it is natural to consider the following problem.

\begin{problem}
What is the maximum spectral radius $\spex_{cc}(n, F)$ among all $F$-free co-connected graphs of order $n$?
\end{problem}

In this section, we will determine the unique graph attaining the maximum spectral radius among all 
$K_{r+1}$-free co-connected graphs of order $n$. 

Let us begin with some definitions. Consider $r$ positive integers $n_1,n_2,\ldots,n_r$ such that 
$n_1+n_2+\cdots+n_r=n$. The complete $r$-partite graph of order $n$ with vertex classes $V_1,V_2,\ldots,V_r$
is denoted by $K_{n_1,n_2,\ldots,n_r}$, where $|V_i|=n_i$, $i=1,2,\ldots,r$. In the following, 
we assume $n_1\geq n_2\geq\cdots\geq n_r$. The Tur\'an graph $T_r(n)$ is the complete 
$r$-partite graph $K_{n_1,n_2,\ldots,n_r}$ satisfying $n_1-n_r\leq 1$. Let $T_r^-(n)$ denote 
the graph obtained from $T_r(n)$ by deleting a star $S_r$ on $r$ vertices, where the center of 
the star is located within $V_1$.

The main result of this section is as follows.

\begin{theorem}\label{thm:co-connected-spectral}
Let $r\geq 2$ and $n\geq 65r^2$. Let $G$ be a co-connected $K_{r+1}$-free graph of order $n$. 
Then $\lambda(G)\leq\lambda(T_r^{-}(n))$, with equality if and only if $G=T_r^{-}(n)$.   
\end{theorem}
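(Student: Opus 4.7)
The plan is to reduce the spectral extremality problem to the Ore edge-extremal theorem and then pin down $T_r^-(n)$ among all Ore edge-extremal graphs via a Kelmans-type local exchange argument. First, I would verify that $T_r^-(n)$ is $K_{r+1}$-free (being a subgraph of $T_r(n)$) and co-connected: the complement $\overline{T_r(n)}$ is a disjoint union of cliques on $V_1, \dots, V_r$, and the $r-1$ deleted edges appear in the complement as a star centered at $v\in V_1$ with one leaf in each of $V_2, \dots, V_r$, thereby stitching the $r$ cliques into one connected graph. A Rayleigh-quotient estimate on a test vector that is constant on each $V_i$ (with small corrections at $v$ and each $u_j$) yields a lower bound $\lambda^\ast := \lambda(T_r^-(n)) \geq (1-1/r)n - C_r/n$ for some explicit $C_r$ depending only on $r$.

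Next, let $G$ be a co-connected $K_{r+1}$-free graph on $n$ vertices with $\lambda(G) \geq \lambda^\ast$. By the Ore-type theorem proved in \cite{BJ08,Christophe2008,Gitler-Valencia2013,Y19}, $e(G) \leq e(T_r(n)) - (r-1)$. If in fact $e(G) \leq e(T_r(n)) - r$, then Theorem~\ref{thm:Spectral-radius-clique} gives
\[
\lambda^2(G) \leq 2\bigl(1 - \tfrac{1}{r}\bigr)\bigl(e(T_r(n)) - r\bigr),
\]
and a short calculation, using the lower bound for $\lambda^\ast$ together with the hypothesis $n \geq 65r^2$, shows this contradicts $\lambda(G) \geq \lambda^\ast$. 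Hence $e(G) = e(T_r(n)) - (r-1)$, so $G$ is an Ore edge-extremal graph, and the structural results in the cited references imply that $G$ is obtained from $T_r(n)$ by deleting $r-1$ edges whose projections onto the quotient on $\{V_1,\dots,V_r\}$ form a spanning tree.

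It remains to show that $T_r^-(n)$ is the unique spectral maximizer among these candidates. If the $r-1$ deleted edges do not all share a common endpoint, choose two of them $p_1q_1$ and $p_2q_2$ with $\{p_1,q_1\}\cap\{p_2,q_2\}=\emptyset$; let $\bm x$ be the Perron vector of $G$ and, after relabelling, assume $x_{p_1} \geq x_{p_2}$ with $p_1\in V_1$. Replace the deleted edge $p_2q_2$ by the deleted edge $p_1q_2$ to form $G'$, checking that $G'$ remains $K_{r+1}$-free and co-connected (the latter uses that the tree structure of the missing edges is preserved after the switch). A Rayleigh-quotient comparison then yields $\lambda(G') > \lambda(G)$, and iterating the swap converges to a star configuration; placing the star's center in the largest part $V_1$ is forced because vertices there carry the smallest Perron weight, so deleting edges there damages $\lambda$ the least. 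The main obstacle is this last step: verifying that each Kelmans-type exchange preserves co-connectedness and produces a strictly positive Rayleigh-quotient gain. The hypothesis $n \geq 65r^2$ should enter precisely here, supplying quantitative Perron-weight bounds for near-Tur\'an graphs that separate entries across different parts enough to guarantee the strict improvement at every swap.
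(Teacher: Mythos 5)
Your overall plan—reduce to the Ore edge-extremal problem and then single out $T_r^-(n)$ among extremal configurations by a local exchange—is in the right spirit, but the central reduction step fails for $r\ge 3$, so the proof as written has a genuine gap.

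You claim that if $e(G)\le e(T_r(n))-r$, then Nikiforov's bound $\lambda^2(G)\le 2(1-1/r)e(G)$ contradicts $\lambda(G)\ge\lambda(T_r^-(n))$, thereby forcing $e(G)=e(T_r(n))-(r-1)$. This is not true. Take $n=rk$ for clarity. One has
\[
\sqrt{2\Big(1-\tfrac{1}{r}\Big)\big(e(T_r(n))-r\big)}
  =(r-1)k\sqrt{1-\tfrac{2}{(r-1)k^{2}}}
  =(r-1)k-\tfrac{1}{k}+O\big(k^{-3}\big),
\]
while a direct computation with the (four-valued, symmetric) Perron vector of $T_r^-(n)$ gives
\[
\lambda\big(T_r^-(n)\big)=(r-1)k-\tfrac{2(r-1)}{rk}+O\big(k^{-2}\big).
\]
Since $\tfrac{2(r-1)}{r}>1$ for every $r\ge 3$, we have
$\lambda(T_r^-(n))<\sqrt{2(1-1/r)(e(T_r(n))-r)}$ asymptotically, and this is visible already at small sizes: for $r=3$, $n=30$ one finds $\lambda(T_3^-(30))\approx 19.875$ while $\sqrt{2\cdot\tfrac{2}{3}\cdot 297}\approx 19.900$. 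Thus Nikiforov's inequality applied to a graph with $e(T_r(n))-r$ edges does \emph{not} contradict the lower bound on $\lambda(G)$; the "short calculation" you invoke only has a chance for $r=2$ (where the first-order terms match and the sign is decided at second order). More generally, this route establishes only $e(G)\ge e(T_r(n))-2(r-1)+O(1)$, not the exact value $e(T_r(n))-(r-1)$, so you cannot conclude that $G$ is an Ore edge-extremal graph, and the rest of your argument does not get off the ground.

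To close this gap you need a structural ingredient beyond the Nikiforov count: the paper gets $e(G)\ge e(T_r(n))-\tfrac{17r}{8}$ (not the exact Ore value) and then applies Brouwer's stability lemma (any $K_{r+1}$-free graph with at least $e(T_r(n))-\lfloor n/r\rfloor+2$ edges is $r$-partite) to conclude $G$ is $r$-partite, after which co-connectedness forces the missing edges to project to a spanning tree $T_G$ on the parts and the exchange arguments make $T_G$ a star. Your concluding Kelmans-type exchange is close to what the paper does in Claim~\ref{claim:T_G-star}, and the final placement of the star's center is again similar in spirit to the paper's final step, so those parts could be salvaged—but only after replacing the failed edge-count reduction with a stability/structural step.
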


\begin{remark}
In our proof, we make no attempt to determine the smallest possible value of $n$ for which 
the theorem is valid, although it can be worked out from the proof to be less than $65r^2$.
\end{remark}

Before giving the proof of Theorem \ref{thm:co-connected-spectral}, we need the following two 
results.

\begin{proposition}[\cite{Mubayi2010}]\label{prop:almost-equal-size}
Suppose that $r\geq 2$ is fixed, $s<n$ and $n_1 +\cdots + n_r = n$. If 
\[
\sum_{1\leq i<j\leq r} n_in_j \geq e(T_r(n)) - s,
\]
then $\lfloor n/r\rfloor - s\leq n_i\leq\lceil n/r\rceil + s$ for all $i$.
\end{proposition}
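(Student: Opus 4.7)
My plan is to reformulate the hypothesis as an inequality on $\sum_i n_i^2$ and then invoke a simple integer estimate to show that any substantial deviation of a part $n_i$ from the balanced size forces $\sum_i n_i^2$ to exceed the permitted value. Write $q=\lfloor n/r\rfloor$ and $t=n-qr\in\{0,1,\ldots,r-1\}$, so the Tur\'an partition $(n_i^{(T)})$ has $t$ parts of size $q+1$ and $r-t$ parts of size $q$, and in particular $\sum_i (n_i^{(T)})^2=rq^2+2qt+t$. The identity $2\sum_{i<j}n_in_j=n^2-\sum_i n_i^2$ applied to both our partition and to the Tur\'an partition turns the hypothesis into $\sum_i n_i^2\leq \sum_i (n_i^{(T)})^2+2s$. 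Introducing the shifted variables $d_i:=n_i-q$, which satisfy $\sum_i d_i = t$, this collapses to the clean bound
\[
\sum_i d_i^2 \,\leq\, t+2s,
\]
while the desired conclusion becomes $-s\leq d_i\leq s+\mathbf{1}_{t>0}$ for every $i$.

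The workhorse will be the elementary fact that for any integers $d_1,\ldots,d_m$ with sum $M$, one has $\sum_j d_j^2\geq \sum_j |d_j|\geq |M|$, since any nonzero integer $d$ satisfies $d^2\geq |d|$ and the last inequality is the triangle inequality. I would record this as a one-line lemma and apply it repeatedly.

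Next, I would argue by contradiction. Suppose some part violates the upper bound, i.e., $d_k\geq s+2$ when $t>0$, or $d_k\geq s+1$ when $t=0$. Applying the lemma to the remaining $r-1$ integers, whose sum is $t-d_k$, gives $\sum_{i\neq k}d_i^2\geq |t-d_k|$, so
\[
\sum_i d_i^2 \,\geq\, d_k^2 + |t-d_k|.
\]
Because the right-hand side is non-decreasing in $d_k$ for $d_k$ past the threshold, it suffices to check the threshold value; a short case split on whether $t\geq s+2$ or $t\leq s+1$, combined with $t\leq r-1$, reduces the required inequality in every sub-case to the elementary bound $(s+1)(s+2)>2s$, which holds since $(s+1)(s+2)-2s=s^2+s+2>0$. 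The mirror assumption $d_k\leq-(s+1)$ is handled identically: the lemma produces $\sum_{i\neq k}d_i^2\geq t+s+1$, whence $\sum_i d_i^2\geq (s+1)^2+(t+s+1)=(s+1)(s+2)+t>t+2s$, again contradicting the reformulated hypothesis.

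Once the sum-of-squares reformulation is in hand, the remaining argument is essentially mechanical. The only slightly delicate bookkeeping is the case split in the upper-bound direction: the ``violation threshold'' in terms of $d_k$ differs by one between $t=0$ and $t>0$, and the sign of $t-d_k$ governs which of two algebraic inequalities one verifies. However, all sub-cases ultimately collapse to the single inequality $(s+1)(s+2)>2s$, and no further tools are required beyond the reformulation and the integer lemma.
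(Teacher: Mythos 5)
Your proof is correct. Note that the paper does not actually prove this proposition---it is quoted from \cite{Mubayi2010} without proof---so there is no in-text argument to compare against; your write-up supplies a valid self-contained one. The reduction of the hypothesis to $\sum_i d_i^2 \leq t+2s$ via $2\sum_{i<j}n_in_j=n^2-\sum_i n_i^2$ is the natural move, and the integer lemma $\sum_j d_j^2\geq\sum_j|d_j|\geq|\sum_j d_j|$ does close every case: the function $d\mapsto d^2+|t-d|$ is nondecreasing for $d\geq 1$, so checking the violation threshold suffices, and each sub-case does reduce to $(s+1)(s+2)>2s$. One small remark worth adding: your thresholds ($d_k\geq s+1$ or $d_k\geq s+2$) tacitly assume $s$ is a nonnegative integer, whereas the paper later invokes the proposition with the non-integral value $s=17r/8$. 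This is harmless---since $\sum_{i<j}n_in_j$ and $e(T_r(n))$ are integers, the hypothesis with real $s\geq 0$ implies the hypothesis with $\lfloor s\rfloor$, and the integer conclusion is stronger---but a sentence to that effect would make the argument airtight as it is actually used in Claim 4.2 and beyond.
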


\begin{lemma}[\cite{Brouwer1981}]\label{lem:r-partite}
Let $r\geq 2$ and $n\geq 2r+1$. Every $K_{r+1}$-free graph with at least 
$e(T_r(n)) - \lfloor n/r\rfloor + 2$ edges is $r$-partite.
\end{lemma}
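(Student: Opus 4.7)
The plan is to prove Brouwer's lemma by strong induction on $n$, via a vertex-deletion argument. For the base case $n=2r+1$, a direct computation gives $e(T_r(2r+1))-\lfloor(2r+1)/r\rfloor+2=e(T_r(2r+1))$, so Tur\'an's theorem forces any $K_{r+1}$-free graph on $2r+1$ vertices attaining this bound to equal $T_r(2r+1)$ itself, which is $r$-partite.

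For the inductive step, let $G$ be $K_{r+1}$-free on $n\geq 2r+2$ vertices with $e(G)\geq e(T_r(n))-\lfloor n/r\rfloor+2$, and suppose for contradiction that $G$ is not $r$-partite. Let $v\in V(G)$ have minimum degree $\delta$. Using the two identities
\[
e(T_r(n))-e(T_r(n-1))=n-\lceil n/r\rceil,\qquad \lfloor n/r\rfloor-\lfloor(n-1)/r\rfloor\in\{0,1\},
\]
I would check that if $\delta\leq n-\lceil n/r\rceil-1$, then $e(G-v)\geq e(T_r(n-1))-\lfloor(n-1)/r\rfloor+2$, so by the inductive hypothesis $G-v$ is $r$-partite with classes $V_1,\ldots,V_r$. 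Since $G$ itself is not $r$-partite, $v$ must have at least one neighbor in every $V_i$ (otherwise one could place $v$ in an empty class to $r$-color $G$).

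The main obstacle is to derive a $K_{r+1}$ in $G$ from this configuration. The edge-count hypothesis bounds the number $M$ of "missing" cross-edges in $G-v$ (edges of $K_{|V_1|,\ldots,|V_r|}$ absent from $G-v$) by $M\leq\lfloor(n-1)/r\rfloor-2$, and hence the set $B$ of endpoints of missing cross-edges satisfies $|B|\leq 2M$. When $V_i\cap N(v)\not\subseteq B$ for every $i$, I would pick $u_i\in V_i\cap N(v)\setminus B$; because $u_i$ has no missing cross-edge incident to it, the vertices $u_1,\ldots,u_r$ are pairwise adjacent in $G-v$, and $\{v,u_1,\ldots,u_r\}$ yields the forbidden $K_{r+1}$. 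The delicate case is when some $V_i\cap N(v)\subseteq B$ (possible if $|V_i\cap N(v)|$ is as small as $1$); there I would handle it via an alternate $r$-coloring of $G-v$ that places $v$ into an empty-neighbor class, or via a local swap argument leveraging the scarcity of missing cross-edges.

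Finally, in the residual case $\delta\geq n-\lceil n/r\rceil$, summing degrees yields $2e(G)\geq n(n-\lceil n/r\rceil)$, which together with Tur\'an's bound $e(G)\leq e(T_r(n))$ pins $G$ very close to $T_r(n)$; a short structural/uniqueness argument then shows $G$ is $r$-partite. The hardest part of the proof is the reattachment step above: showing that the near-Tur\'an density of $G-v$ always leaves room to pick an $r$-clique among $v$'s neighbors that avoids the small missing-edge damage set.
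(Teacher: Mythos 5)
The paper does not prove this lemma at all --- it is quoted from Brouwer's 1981 report --- so there is no in‑paper argument to compare against; your proposal has to stand on its own, and as written it does not. The induction skeleton is sound: the base case $n=2r+1$ reduces to Tur\'an's theorem with uniqueness, and the bookkeeping showing that deleting a vertex of degree $\delta\le n-\lceil n/r\rceil-1$ keeps $G-v$ above the threshold for $n-1$ is correct. But the reattachment step, which you yourself flag as ``the delicate case,'' is not an edge case --- it is the entire content of the lemma, and your counting is too lossy to close it. Your bound $M\le\lfloor(n-1)/r\rfloor-2$ on missing cross-edges leaves room for roughly $n/r$ missing edges, whereas a \emph{single} missing edge can block every transversal clique through $v$: already for $r=2$, the vertex $v$ may have exactly one neighbour $u_1\in V_1$ and one neighbour $u_2\in V_2$ (the degree bound one can extract here is only $\delta\ge 2$), and then $u_1u_2\notin E(G)$ is forced by triangle-freeness at a cost of just one missing edge --- no contradiction. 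The same happens for general $r$ whenever two of the sets $N(v)\cap V_i$ are singletons. Neither proposed repair works as stated: recolouring $u_1$ into another class requires $u_1$ to miss an \emph{entire} class, which costs about $n/r$ missing edges and is exactly what your budget forbids; and ``a local swap argument'' is not an argument. Note also that the bound is tight (for $r=2$ the balanced blow-up of $C_5$ is triangle-free, non-bipartite, and has exactly $e(T_2(n))-\lfloor n/2\rfloor+1$ edges), so any correct proof must be essentially lossless in this counting; a sharper route is to bound $M$ by $e(T_r(n-1))-e(G)+\delta$ directly from the original hypothesis rather than from the induction threshold --- this does close the $r=2$ case via $(a-1)(b-1)\le -1$, but for $r\ge3$ the transversal-blocking configurations still survive and require a further idea.

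The residual case $\delta\ge n-\lceil n/r\rceil$ is also not as claimed. Writing $n=kr+t$ with $1\le t\le r-1$, one computes $2e(T_r(n))-n\bigl(n-\lceil n/r\rceil\bigr)=k(r-t)$, which is of order $n$ when $t\le r-2$; so the degree-sum inequality $2e(G)\ge n\delta$ gives $e(G)\ge e(T_r(n))-k(r-t)/2$, which is \emph{weaker} than your standing hypothesis and pins down nothing. What actually handles this case is a minimum-degree chromatic threshold in the spirit of Andr\'asfai--Erd\H{o}s--S\'os (a $K_{r+1}$-free graph with $\delta>(1-\tfrac{3}{3r-1})n$ is $r$-partite), but that is a substantial theorem you would have to import or prove, and its hypothesis is not even met for the small values $2r+1\le n\lesssim 3r^2$ that the lemma covers. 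In short: the strategy is plausible and the easy steps are right, but both hard steps are currently gaps; since the paper itself only cites Brouwer, the honest course is either to cite the result as the authors do or to supply a genuinely complete argument for the reattachment and residual cases.
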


We are now ready to complete the proof of Theorem \ref{thm:co-connected-spectral}.
\par\vspace{2mm}

\noindent {\it Proof of Theorem \ref{thm:co-connected-spectral}}.
Let $G$ be a co-connected $K_{r+1}$-free graph of order $n$ that attains the maximum 
spectral radius. It suffices to show that $G=T_r^{-}(n)$. To finish the proof we 
first prove a few facts about $G$ and the Perron vector $\bm{x}$ of $G$.

\begin{claim}\label{claim:lower-bound-lambda}
$\lambda(G) > \lambda(T_r(n)) - 2r/n$.
\end{claim}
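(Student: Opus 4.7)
The plan is to exhibit an explicit co-connected $K_{r+1}$-free graph of order $n$ whose spectral radius is already within $2r/n$ of $\lambda(T_r(n))$, and then invoke the extremality of $G$. The obvious candidate is $T_r^-(n)$ itself. First I would check that $T_r^-(n)$ is indeed co-connected and $K_{r+1}$-free: the complement of $T_r^-(n)$ is the disjoint union of cliques $K_{n_1},\ldots,K_{n_r}$ together with the $r-1$ star edges joining one vertex of $V_1$ to one vertex in each of $V_2,\ldots,V_r$; the star makes this complement connected. Being a subgraph of $T_r(n)$, it is $K_{r+1}$-free. Hence by the extremality of $G$,
\[
\lambda(G)\ \ge\ \lambda(T_r^-(n)),
\]
and it suffices to prove $\lambda(T_r^-(n)) > \lambda(T_r(n)) - 2r/n$.

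The next step is to bound $\lambda(T_r^-(n))$ from below by the Rayleigh quotient using the Perron vector $\bm{y}$ of $T_r(n)$. Since $T_r^-(n)$ is obtained from $T_r(n)$ by deleting the $r-1$ edges of a star $S_r$,
\[
\lambda(T_r^-(n))\ \ge\ \bm{y}^{\mathrm{T}} A(T_r^-(n))\bm{y}\ =\ \lambda(T_r(n)) - 2\!\!\sum_{uv\in E(S_r)}\!\! y_u y_v.
\]
So everything reduces to showing $\sum_{uv\in E(S_r)} y_u y_v < r/n$.

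For this I would exploit the very rigid structure of $\bm{y}$: it is constant on each part $V_i$, so write $y_v=a_i$ for $v\in V_i$. The eigenequation $\lambda(T_r(n))\,a_i = \sum_{j\ne i} n_j a_j$ rearranges to $a_i = T/(\lambda(T_r(n))+n_i)$ with $T=\sum_j n_j a_j$, and since $|n_i-n_j|\le 1$ together with $\lambda(T_r(n)) = \Theta(n)$, all the $a_i$ are within a factor $1+O(1/n)$ of one another. Combining this with the normalization $\sum_i n_i a_i^2 = 1$ gives $\max_i a_i \le (1+O(1/n))/\sqrt{n}$. Each of the $r-1$ star edges then contributes at most $\max_i a_i^2 \le (1+O(1/n))/n$, yielding
\[
\sum_{uv\in E(S_r)} y_u y_v\ \le\ (r-1)\cdot\frac{1+O(1/n)}{n}\ <\ \frac{r}{n},
\]
valid for $n\ge 65r^2$. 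Chaining these inequalities gives $\lambda(G)\ge\lambda(T_r^-(n))>\lambda(T_r(n))-2r/n$.

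The only mildly technical step is the bound $\max_i a_i \le (1+O(1/n))/\sqrt{n}$; everything else is a direct Rayleigh-quotient comparison. I would expect this to be the main obstacle, but it is a routine computation from the closed form $a_i = T/(\lambda(T_r(n))+n_i)$ and the standard estimate $\lambda(T_r(n)) \ge (1-1/r)n$, so the argument should close cleanly within the hypothesis $n \ge 65r^2$.
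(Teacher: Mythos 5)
Your argument is essentially the paper's: you reduce to $\lambda(G)\ge\lambda(T_r^-(n))$ by extremality (after checking $T_r^-(n)$ is co-connected and $K_{r+1}$-free), then lower-bound $\lambda(T_r^-(n))$ by the Rayleigh quotient against the Perron vector $\bm{y}$ of $T_r(n)$ and bound the loss from the $r-1$ deleted star edges by a pointwise estimate on $\bm{y}$. The paper just uses the blunt estimate $y_v<1/\sqrt{n-r}$ in place of your closed-form $a_i=T/(\lambda+n_i)$ computation, but the structure and the conclusion are identical.
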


\noindent {\it Proof of Claim \ref{claim:lower-bound-lambda}}.
Let $\bm{y}$ be the Perron vector of $T_r(n)$. Then for each $v\in V(T_r(n))$,
\[
y_v < \frac{1}{\sqrt{n-r}}.
\]
By the maximality of $\lambda(G)$ and Rayleigh's principle,
\begin{align*}
\lambda(G) \geq \lambda(T_r^-(n)) 
& \geq 2\sum_{uv\in E(T_r^-(n))} y_uy_v \\
& = 2\sum_{uv\in E(T_r(n))} y_uy_v - 2\sum_{uv\in E(T_r(n))\setminus E(T_r^-(n))} y_uy_v \\
& > \lambda(T_r(n)) - \frac{2(r-1)}{n-r} \\
& > \lambda(T_r(n)) - \frac{2r}{n},
\end{align*}
completing the proof of the claim.
\hfill $\diamondsuit$ \par\vspace{2.5mm}

\begin{claim}\label{claim:large-size}
$e(G)\geq e(T_r(n)) - 17r/8$.
\end{claim}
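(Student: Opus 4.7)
The plan is to combine Claim~\ref{claim:lower-bound-lambda} with Nikiforov's inequality (Theorem~\ref{thm:Spectral-radius-clique}) in order to turn the spectral lower bound on $\lambda(G)$ into an edge-count lower bound on $G$. Since $G$ is $K_{r+1}$-free, $\omega(G)\le r$ and Theorem~\ref{thm:Spectral-radius-clique} yields $\lambda^{2}(G)\le 2(1-\tfrac{1}{r})e(G)$; thus any lower bound on $\lambda(G)$ translates directly into one on $e(G)$.

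To get a good lower bound on $\lambda(G)$, I would first invoke the elementary average-degree bound $\lambda(T_r(n))\ge 2e(T_r(n))/n$. Together with Claim~\ref{claim:lower-bound-lambda} this gives
\[
\lambda(G)\ >\ \lambda(T_r(n))-\frac{2r}{n}\ \ge\ \frac{2\bigl(e(T_r(n))-r\bigr)}{n},
\]
and then squaring and applying Nikiforov's inequality would yield
\[
e(G)\ \ge\ \frac{r}{2(r-1)}\lambda^{2}(G)\ >\ \frac{2r\bigl(e(T_r(n))-r\bigr)^{2}}{(r-1)n^{2}}.
\]

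The remaining task is to show that this right-hand side exceeds $e(T_r(n))-\tfrac{17r}{8}$. For this I would use the explicit formula
\[
e(T_r(n))\ =\ \frac{(r-1)n^{2}}{2r}-\frac{s(r-s)}{2r},\qquad s:=n-r\lfloor n/r\rfloor,
\]
together with $s(r-s)\le r^{2}/4$, so that $\delta:=s(r-s)/(2r)\in[0,r/8]$. Writing $\alpha:=(r-1)/(2r)$ and $E:=e(T_r(n))=\alpha n^{2}-\delta$, expanding $(E-r)^{2}$ and using the exact cancellation $4r\alpha/(r-1)=2$ would collapse the leading piece to $\alpha n^{2}=E+\delta$ and turn the linear cross term into $-2(\delta+r)=-2\delta-2r$. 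Discarding the remaining positive $O(1/n^{2})$ tail then gives
\[
e(G)\ >\ E-\delta-2r\ \ge\ E-\tfrac{r}{8}-2r\ =\ e(T_r(n))-\tfrac{17r}{8},
\]
as desired.

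The only delicate point is the bookkeeping that produces the exact constant $17/8$: the ``$1/8$'' comes from $\max_{0\le s<r}s(r-s)=r^{2}/4$, and the ``$2$'' in front of $r$ comes from the cancellation $4r\alpha/(r-1)=2$ after substituting the formula for $e(T_r(n))$. The hypothesis $n\ge 65r^{2}$ is vastly stronger than required for this step---any $n$ of order $r$ is already enough to absorb the positive error terms that are dropped.
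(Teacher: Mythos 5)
Your proposal is correct and follows exactly the paper's route: Claim~\ref{claim:lower-bound-lambda} plus the average-degree bound $\lambda(T_r(n))\ge 2e(T_r(n))/n$ gives a numerical lower bound on $\lambda(G)$, Nikiforov's inequality converts this into $e(G)>\tfrac{2r}{(r-1)n^2}(e(T_r(n))-r)^2$, and substituting the closed form $e(T_r(n))=\tfrac{r-1}{2r}n^2-\tfrac{t(r-t)}{2r}$ with $t(r-t)\le r^2/4$ yields $e(G)>e(T_r(n))-17r/8$ after discarding a positive $O(1/n^2)$ tail. You merely make explicit the algebra the paper compresses into two lines, and your remark that $n\ge 65r^2$ is overkill for this particular claim is accurate.
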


\noindent {\it Proof of Claim \ref{claim:large-size}}.
By Claim \ref{claim:lower-bound-lambda}, $\lambda(G) > \lambda(T_r(n)) - 2r/n$.
On the other hand, by Theorem \ref{thm:Spectral-radius-clique},
\[
\lambda(G) \leq\sqrt{2\Big(1-\frac{1}{r}\Big)e(G)}.
\]
Combining the two inequalities, gives 
\[
2\Big(1-\frac{1}{r}\Big)e(G) > \Big(\lambda (T_r(n)) - \frac{2r}{n}\Big)^2
\geq \frac{(2e(T_r(n)) - 2r)^2}{n^2}.
\]
Solving this inequality, we find that
\begin{equation}\label{eq:e(G)}
e(G) > \frac{2r}{(r-1)n^2} \cdot (e(T_r(n)) - r)^2.
\end{equation}

Let $n = kr + t$, $0\leq t < r$. A simple calculation shows that
\begin{equation}\label{eq:size-Turan-graph}
e(T_r(n)) = \frac{1}{2} \Big( 1 - \frac{1}{r} \Big) n^2 - \frac{1}{2} t \Big( 1 - \frac{t}{r} \Big).
\end{equation} 
In view of \eqref{eq:e(G)} and \eqref{eq:size-Turan-graph},
\begin{align*}
e(G) 
& \geq e(T_r(n)) - 2r - \frac{t}{2} \Big(1-\frac{t}{r}\Big) \\
& \geq e(T_r(n)) - \frac{17r}{8},
\end{align*}
completing the proof of the claim.
\hfill $\diamondsuit$ \par\vspace{2.5mm}

By Lemma \ref{lem:r-partite} and Claim \ref{claim:large-size}, $G$ must be an $r$-partite graph.
Hereafter, assume that $G$ is an $r$-partite graph with vertex classes $V_i$, where $|V_i|=n_i$ 
for $i=1,2,\ldots,r$ and $n_1\geq n_2\geq\cdots\geq n_r$.

Prior to proceeding further, let us address the case when $r = 2$. If $n$ is even, similar to the reasoning 
in Claim \ref{claim:lower-bound-lambda} and Claim \ref{claim:large-size}, we find that $e(G) = e(T_2(n)) - 1$, 
and therefore $G=T_2^-(n)$. If $n$ is odd, it follows that $\lambda (G)\geq\sqrt{e(T_2(n))} - 2/(n-2)$, leading to 
$e(G)\geq e(T_2(n))-2$, in alignment with the arguments presented in Claim \ref{claim:lower-bound-lambda} and 
Claim \ref{claim:large-size}. On the other hand, $e(G)\leq n_1(n-n_1) - 1$. Consequently, $n_1(n-n_1) \geq e(T_2(n)) - 1$, 
which yields $n_1=\lceil n/2\rceil$. Hence, $G = T_2^-(n)$.

In what follows, we assume $r\geq 3$. We define a graph $T_G$ associated with $G$ and $\{V_i\}_{i=1}^r$ as 
follows: the vertex set of $T_G$ is $\{V_1,V_2,\ldots,V_r\}$, and $V_i$ and $V_j$ are adjacent in $T_G$ 
if and only if there is an edge between $V_i$ and $V_j$ in $\overline{G}$. The next claim is clear, 
and is therefore presented here without a proof.

\begin{claim}\label{claim:iff-co-connected}
$G$ is co-connected if and only if $T_G$ is connected.
\end{claim}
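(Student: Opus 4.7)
The plan is to exploit the single structural observation that each part $V_i$ of $G$ is independent in $G$, hence $\overline{G}[V_i]$ is a clique. This means that within any one class, every two vertices are either equal or adjacent in $\overline{G}$; the rest of the claim is then a routine translation between walks in $\overline{G}$ and walks in $T_G$.

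For the forward direction, I would assume $\overline{G}$ is connected and prove $T_G$ is connected by showing any two classes $V_i$ and $V_j$ lie in a common component of $T_G$. Pick $u \in V_i$ and $v \in V_j$ and fix a $u$--$v$ path $P = w_0 w_1 \cdots w_\ell$ in $\overline{G}$. Record the sequence of classes $V_{c(w_0)}, V_{c(w_1)}, \ldots, V_{c(w_\ell)}$, and then collapse consecutive repetitions to obtain a sequence $V_i = V_{i_0}, V_{i_1}, \ldots, V_{i_k} = V_j$. Every jump $V_{i_s} \to V_{i_{s+1}}$ comes from an edge of $P$ whose endpoints lie in different classes, and by definition such an edge is exactly an edge of $T_G$. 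Therefore $V_i$ and $V_j$ are joined by a walk in $T_G$.

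For the reverse direction, I would assume $T_G$ is connected and show any two vertices $u, v$ of $G$ are joined by a path in $\overline{G}$. Let $u \in V_i$ and $v \in V_j$. If $i = j$ the conclusion is immediate because $\overline{G}[V_i]$ is a clique. Otherwise, take a $T_G$-path $V_i = V_{i_0}, V_{i_1}, \ldots, V_{i_k} = V_j$ and, for each step, pick an actual edge $a_s b_{s+1} \in E(\overline{G})$ with $a_s \in V_{i_s}$ and $b_{s+1} \in V_{i_{s+1}}$. Setting $b_0 = u$ and $a_k = v$, I would then splice together the edges $b_s a_s$ inside the clique $\overline{G}[V_{i_s}]$ (which is a single vertex or an edge, depending on whether $b_s = a_s$) with the crossing edges $a_s b_{s+1}$, producing a walk from $u$ to $v$ in $\overline{G}$.

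There is essentially no obstacle: the only thing to watch is the degenerate bookkeeping when $b_s = a_s$ within a class or when two consecutive chosen vertices in $V_{i_s}$ coincide, which is handled simply by deleting the trivial intra-class step. This is presumably why the authors describe the claim as clear and omit its proof.
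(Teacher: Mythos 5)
Your argument is correct and fills in exactly the routine verification the paper declares ``clear'' and therefore omits. Both directions rest on the single structural fact you isolate (each $V_i$ is a clique in $\overline{G}$): the forward direction collapses the class-sequence of a path in $\overline{G}$ to a walk in $T_G$, and the reverse direction lifts a path in $T_G$ to a walk in $\overline{G}$ by splicing intra-class clique edges with the chosen crossing edges, handling the degenerate coincidences $b_s=a_s$ as you note. This is the intended argument; there is no gap.
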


By Claim \ref{claim:iff-co-connected} and the maximality of $\lambda(G)$, $T_G$ is a tree, and for 
any $i\neq j$ there is at most one non-edge between $V_i$ and $V_j$. It follows that $e(G) = e(K_{n_1,n_2,\ldots,n_r}) - (r-1)$.

\begin{claim}\label{claim:T_G-star}
$T_G$ is a star.
\end{claim}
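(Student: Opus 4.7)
The plan is to argue by contradiction: suppose $T_G$ is a tree but not a star. Then $T_G$ has diameter at least three and contains an induced path $V_aV_bV_cV_d$ of length three. Denote the three corresponding non-edges of $G$ as $e_{ab}=\alpha\beta_1$, $e_{bc}=\beta_2\gamma_1$ and $e_{cd}=\gamma_2\delta$, where $\alpha\in V_a$, $\beta_1,\beta_2\in V_b$, $\gamma_1,\gamma_2\in V_c$ and $\delta\in V_d$, possibly with $\beta_1=\beta_2$ or $\gamma_1=\gamma_2$.

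I will consider two candidate modifications of $G$, each relocating one of the two outer non-edges. Rotation $R_a$ deletes $e_{ab}=\alpha\beta_1$ (restoring the edge $\alpha\beta_1$) and creates a new non-edge $\alpha\gamma_1$ (removing the edge $\alpha\gamma_1$, which is present in $G$ because the path $V_aV_bV_cV_d$ is induced). Symmetrically, rotation $R_d$ deletes $e_{cd}=\gamma_2\delta$ and creates the non-edge $\beta_2\delta$. Denote the resulting graphs by $G_a$ and $G_d$. In each case the new graph is still $r$-partite (hence $K_{r+1}$-free), still has exactly $r-1$ non-edges with at most one between any pair of parts, and the associated part-tree is obtained from $T_G$ by exchanging the edge $V_aV_b$ with $V_aV_c$ (resp.\ $V_cV_d$ with $V_bV_d$). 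Since $T_G$ is a tree and the path $V_aV_bV_cV_d$ is induced, this swap preserves treeness, so by Claim \ref{claim:iff-co-connected} both $G_a$ and $G_d$ are co-connected, and the length-three path among $V_a,V_b,V_c,V_d$ is strictly shortened.

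Let $\bm{x}$ be the (unit-length) Perron vector of $G$. By Rayleigh's principle applied to $\bm{x}$, the two spectral gains satisfy $\lambda(G_a)-\lambda(G)\geq 2x_{\alpha}(x_{\beta_1}-x_{\gamma_1})$ and $\lambda(G_d)-\lambda(G)\geq 2x_{\delta}(x_{\gamma_2}-x_{\beta_2})$. Since $\bm{x}$ is strictly positive, a strict positive value of either right-hand side would already contradict the maximality of $\lambda(G)$. So it suffices to assume $x_{\beta_1}\leq x_{\gamma_1}$ and $x_{\gamma_2}\leq x_{\beta_2}$. Using the eigenvalue identity $\lambda x_v=\sum_{u\sim v}x_u$, each of $x_{\beta_1},x_{\beta_2}$ (resp.\ $x_{\gamma_1},x_{\gamma_2}$) can be written as $s_b$ (resp.\ $s_c$) minus an explicit correction arising from the opposite endpoints of the incident non-edges; here $s_b,s_c$ denote the common Perron value on non-endpoint vertices of $V_b$ and $V_c$, which exist because $n\geq 65r^2$ together with Proposition \ref{prop:almost-equal-size} force $|V_b|,|V_c|$ to greatly exceed the number of non-edges. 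Combining the two inequalities with these explicit formulas then leads to an algebraic relation among $s_b,s_c$ and the Perron values $x_\alpha,x_\delta$ that is incompatible with the path structure, yielding the desired contradiction.

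The main obstacle is the degenerate sub-case where the path non-edges collide at a single vertex in $V_b$ or $V_c$ (i.e.\ $\beta_1=\beta_2$ and/or $\gamma_1=\gamma_2$); there the correction terms double up, the leading factors in the two gains become equal in magnitude, and the Rayleigh-based analysis becomes tight. The cleanest remedy is a brief preliminary swap: show that in the extremal $G$, all non-edges incident to any single part $V_i$ in fact share a common endpoint within $V_i$, by exchanging a non-edge between two distinct endpoints in $V_i$ and comparing the resulting Perron gains (which come in pairs of opposite sign). Once this ``concentration'' property is in place, the rotations $R_a$ and $R_d$ become strict improvements whenever $T_G$ is not a star, completing the argument.
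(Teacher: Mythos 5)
Your plan and the paper's proof rest on the same two ingredients, just in reverse order: (i) a ``concentration'' step ensuring the two non-edges incident to each of the two middle parts of a length-three path $V_aV_bV_cV_d$ in $T_G$ share a common endpoint, and (ii) a rotation that slides one of the outer non-edges inward along the path to produce a co-connected $K_{r+1}$-free graph with strictly larger spectral radius. The paper carries out (i) first (via the swap $G_1 := G + v_1v_2^{(1)} - v_1v_2^{(2)}$ when the endpoints in $V_2$ differ), which reduces the picture to a single endpoint $v_2$ in $V_2$ and a single $v_3$ in $V_3$; then a single rotation $G_2 := G + v_3v_4 - v_2v_4$, with the WLOG choice $x_{v_2}\leq x_{v_3}$, gives the contradiction (this is your $R_d$, applied to the reversed path if the WLOG goes the other way).

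The middle part of your write-up, where you try to extract a contradiction from $x_{\beta_1}\leq x_{\gamma_1}$ and $x_{\gamma_2}\leq x_{\beta_2}$ via the formulas $x_{\beta_1}=s_b-x_\alpha/\lambda$, $x_{\beta_2}=s_b-x_{\gamma_1}/\lambda$, etc., does not actually close: adding those two inequalities only yields $x_\alpha+x_\delta\geq x_{\beta_2}+x_{\gamma_1}$, which is not incompatible with the path structure on its own, so ``leads to an algebraic relation\,...\,yielding the desired contradiction'' is a genuine gap. You correctly diagnose this, and your proposed remedy---proving the concentration property first---is exactly what the paper does, after which the argument is identical to the paper's. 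One small point to make explicit in both $R_a/R_d$ and in the concentration swap: when the Rayleigh gain is exactly zero (e.g.\ $x_\beta=x_\gamma$), strict increase of the spectral radius still follows because otherwise $\bm{x}$ would be a Perron vector of the modified graph, and the eigenvalue equation at the vertex that lost a neighbor would force a Perron entry to vanish.
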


\noindent {\it Proof of Claim \ref{claim:T_G-star}}.
The assertion is clear for $r=3$. So, in the following we assume $r\geq 4$ and prove this claim 
by contradiction. To do this, we assume, contrary to our claim, that $T_G$ is not a star. 
Then the diameter of $G$ is at least $3$ as $G$ being a tree. Without loss of generality, 
let $V_1V_2V_3V_4$ be a path of length $3$ in $T_G$ (recall that $V_1$, $V_2$, $V_3$, $V_4$ 
are the vertices of $T_G$). Hence, there exist vertices $v_1\in V_1$, $v_4\in V_4$ and 
$v_j^{(1)}, v_j^{(2)}\in V_j$, $j=2,3$ such that $v_1v_2^{(1)}, v_2^{(2)} v_3^{(1)}, v_3^{(2)} v_4\notin E(G)$.

We first show that $v_2^{(1)}$ and $v_2^{(2)}$ are the same vertex. Indeed, if not, 
we may assume $x_{v_2^{(1)}}\geq x_{v_2^{(2)}}$. Let
$G_1:= G + v_1v_2^{(1)} - v_1v_2^{(2)}$. Clearly, $G_1$ is co-connected and 
$\lambda (G_1) > \lambda (G)$, a contradiction. Likewise, $v_3^{(1)} = v_3^{(2)}$. 

Hereafter, we denote $v_j:= v_j^{(i)}$ for $j=2,3$. Since $T_G$ is a tree, $v_2v_4\in E(G)$. 
We can assume, without loss of generality, that $x_{v_2}\leq x_{v_3}$. Let $G_2:=G+v_3v_4 - v_2v_4$. 
Clearly, $G_2$ is co-connected by Claim \ref{claim:iff-co-connected}. However, 
$\lambda(G_2)>\lambda(G)$, a contradiction finishing the proof of the claim.
\hfill $\diamondsuit$ \par\vspace{2.5mm}

By Claim \ref{claim:T_G-star} and the maximality of $\lambda(G)$, $G$ is an $r$-partite 
graph obtained from $K_{n_1,n_2,\ldots,n_r}$ by removing a star $S_r$. For each 
$i=1,2,\ldots,r$, let $w_i\in V_i$ denote the ends of the star. For convenience, 
we also use $w$ to denote the center of the star and $K_{n_1,n_2,\ldots,n_r}^w$ to 
denote $G$ henceforth. Additionally, set $\lambda:=\lambda (G)$ for short.

\begin{claim}\label{claim:x-comp}
For each $v\in V(G)$,
\[ 
\frac{1}{\sqrt{n}} - \frac{10r}{\lambda \sqrt{n}} < x_v < \frac{1}{\sqrt{n}} + \frac{10r}{\lambda \sqrt{n}}.
\]
\end{claim}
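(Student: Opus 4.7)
The plan is to compute the Perron vector $\bm{x}$ of $G = K_{n_1,\ldots,n_r}^w$ almost explicitly from the eigenvalue equation, exploiting that $G$ differs from the complete multipartite graph $K_{n_1,\ldots,n_r}$ by only $r-1$ edges. For any ordinary vertex $v\in V_i$ (meaning $v\notin\{w,w_2,\ldots,w_r\}$) one has $N_G(v)=V(G)\setminus V_i$, so $\lambda x_v = s-\sigma_i$ with $s:=\sum_u x_u$ and $\sigma_i:=\sum_{u\in V_i}x_u$; hence $x_v$ takes a common value $a_i$ on the ordinary vertices of $V_i$. The eigenequation at $w_i$ for $i\ge 2$ forces $x_{w_i}=a_i-x_w/\lambda$. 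Substituting $\sigma_1=(n_1-1)a_1+x_w$ and $\sigma_i=n_i a_i-x_w/\lambda$ for $i\ge 2$, the system collapses to
\begin{equation*}
(\lambda+n_1-1)\,a_1 = s-x_w, \qquad (\lambda+n_i)\,a_i = s+x_w/\lambda \quad (i\ge 2).
\end{equation*}

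Next I collect the quantitative inputs. Because $T_G$ is a star (Claim \ref{claim:T_G-star}), $e(G) = e(K_{n_1,\ldots,n_r}) - (r-1)$; combined with Claim \ref{claim:large-size} and Proposition \ref{prop:almost-equal-size} this gives $|n_i - n/r| \leq 3r$ for each $i$. Claim \ref{claim:lower-bound-lambda} together with the trivial upper bound $\lambda \leq \Delta(G) \leq n - n_r$ yields $\lambda = (1-1/r)n + O(r)$, whence $|n - \lambda - n_i| \leq O(r)$ and both $\lambda$ and $\lambda + n_i$ are of order $n$ under the hypothesis $n \geq 65r^2$. Cauchy--Schwarz gives $s \leq \sqrt n$, and the eigenequation yields $x_u \leq s/\lambda \leq \sqrt n/\lambda$ for every $u$; in particular $x_w, x_{w_i} \leq \sqrt n/\lambda$.

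Substituting these bounds into the rearrangement
\begin{equation*}
a_i-\frac{s}{n} \;=\; \frac{s(n-\lambda-n_i) + nx_w/\lambda}{n(\lambda+n_i)} \qquad (i\ge 2)
\end{equation*}
(and its analogue for $a_1$) yields $|x_v - s/n| = O(r/(\lambda\sqrt n))$ for every vertex $v$, after accounting for the $x_w/\lambda$ correction in $x_{w_i}$. To replace $s/n$ by $1/\sqrt n$, the normalization $\sum_v x_v^2 = 1$ gives the identity $1 - s^2/n = \sum_v(x_v - s/n)^2$, whose right-hand side is at most $n\cdot O(r^2/(\lambda^2 n)) = O(r^2/\lambda^2)$; hence $|s/n - 1/\sqrt n| = O(r^2/(n\lambda^2\sqrt n))$, which is negligible compared with $r/(\lambda\sqrt n)$. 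Combining the estimates yields $|x_v - 1/\sqrt n| < 10r/(\lambda\sqrt n)$ for every $v$.

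The main obstacle is the quantitative bookkeeping required to pin down the constant $10$: the $O(r)$ errors in $\lambda$ and $n_i$, together with the $O(\sqrt n/\lambda)$ bound on $x_w$, must be carefully propagated through each substitution so that the final constant does not inflate beyond $10$. A minor asymmetry arises between $V_1$ (where $w$ is missing $r-1$ neighbours) and $V_i$ for $i \geq 2$ (where $w_i$ is missing only one), reflected in the $(\lambda+n_1-1)$ versus $(\lambda+n_i)$ factors; the resulting $O(1)$ discrepancy is absorbed into the $O(r)$ error and does not threaten the bound.
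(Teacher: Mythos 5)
Your approach is correct and closely parallels the paper's, but you flesh out a step that the paper leaves entirely implicit. The paper first derives the rough bound $x_v \le \sqrt n/(1+\lambda)$, then asserts without detail that ``taking into account [this bound] along with the eigenvalue--eigenvector equation'' gives $x_{\max}-x_{\min} < 10r/(\lambda\sqrt n)$, and finally converts this to two-sided bounds via $1 = \|\bm x\|_2^2 \ge n\,x_{\min}^2 > n\bigl(x_{\max}-10r/(\lambda\sqrt n)\bigr)^2$. You instead solve the eigenequation class-by-class to get $(\lambda+n_1-1)a_1 = s - x_w$ and $(\lambda+n_i)a_i = s + x_w/\lambda$, bound each $|x_v - s/n|$ from these formulas, and replace $s/n$ by $1/\sqrt n$ via the variance identity $1-s^2/n = \sum_v(x_v-s/n)^2$. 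The explicit solve makes the paper's asserted oscillation bound transparent, and your variance-identity normalization is a mild variant of theirs; the quantitative inputs (the class-size bound via Proposition \ref{prop:almost-equal-size} plus Claim \ref{claim:large-size}, the lower bound on $\lambda$ from Claim \ref{claim:lower-bound-lambda}, the crude $x_u\le\sqrt n/\lambda$, and the final normalization) are the same in both. Two minor remarks: your $|s/n-1/\sqrt n|$ bound should read $O\bigl(r^2/(\lambda^2\sqrt n)\bigr)$, not $O\bigl(r^2/(n\lambda^2\sqrt n)\bigr)$ (harmless, still negligible next to $r/(\lambda\sqrt n)$); and you correctly note but do not carry out the constant-tracking needed to land on $10$ --- though the paper itself also skips this.
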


\noindent {\it Proof of Claim \ref{claim:x-comp}}.
Let $v\in V(G)$. We first give a rough upper bound on $x_v$. By eigenvalue\,--\,eigenvector 
equation and Cauchy\,--\,Schwarz inequality, 
\[
(1 + \lambda) x_v = x_v + \sum_{uv\in E(G)} x_u \leq \sqrt{n}.
\]
On the other hand, in view of Claim \ref{claim:lower-bound-lambda}, 
\begin{align*}
\lambda  
& > \lambda (T_r(n)) - \frac{2r}{n} \geq \Big(n -\Big\lceil\frac{n}{r}\Big\rceil\Big) - \frac{2r}{n} \\
& \geq n - \Big(\frac{n}{r} + 1 - \frac{1}{r}\Big) - \frac{2r}{n} \\
& > \Big(1 - \frac{1}{r}\Big) n - 1.
\end{align*}
As a consequence,
\begin{equation}\label{eq:x-comp-upper-bound}
x_v \leq \frac{\sqrt{n}}{1 + \lambda} < \frac{r}{r-1} \frac{1}{\sqrt{n}}.
\end{equation}

Applying \eqref{eq:x-comp-upper-bound}, further estimations on $x_v$ can be derived.
To this end, note that $\lfloor n/r\rfloor - 17r/8 \leq n_i\leq\lceil n/r\rceil + 17r/8$ 
by Proposition \ref{prop:almost-equal-size} and Claim \ref{claim:large-size}.
Taking into account \eqref{eq:x-comp-upper-bound} along with the eigenvalue\,--\,eigenvector 
equation, we arrive at the following inequality:
\begin{equation}\label{eq:xmax-xmin}
x_{\max} - x_{\min} < \frac{10r}{\lambda \sqrt{n}},
\end{equation}
where $x_{\max}$ and $x_{\min}$ are the maximum and the minimum of the components of $\bm{x}$, respectively.
Recall that $\bm{x}$ has length $1$. Hence, 
\[
1 = \|\bm{x}\|_2^2 \geq nx_{\min}^2 > n\Big(x_{\max} - \frac{10r}{\lambda \sqrt{n}}\Big)^2,
\]
which yields that
\[
x_{\max} < \frac{1}{\sqrt{n}} + \frac{10r}{\lambda \sqrt{n}}.
\]
On the other hand, by \eqref{eq:xmax-xmin},
\[
x_{\min} > x_{\max} - \frac{10r}{\lambda \sqrt{n}} \geq \frac{1}{\sqrt{n}} - \frac{10r}{\lambda \sqrt{n}},
\]
completing the proof of Claim \ref{claim:x-comp}.
\hfill $\diamondsuit$ \par\vspace{2.5mm}

Next, we will demonstrate through contradiction that $n_1 - n_r \leq 1$. We will argue that if 
$n_1-n_r\geq 2$ then $\lambda (K_{n_1-1,n_2,\ldots,n_{r-1},n_r+1}^w) > \lambda (G)$, leading to a contradiction.

Let $a_i$ denote the $\bm{x}$-component of the vertex $w_i$, and $b_i$ denote the 
$\bm{x}$-component of the vertices in $V_i$ rather than $w_i$, $i=1,2,\ldots,r$.
By eigenvalue\,--\,eigenvector equation, 
\[
\lambda b_i = \sum_{j\neq i} (n_j-1)b_j + \sum_{j=1}^r a_j - a_i
=: A + B - a_i - (n_i-1) b_i,
\]
where $A:=\sum_{j=1}^r a_j$, $B:=\sum_{j=1}^r (n_j-1) b_j$. This is equivalent to
\begin{equation}\label{eq:eq1}
(\lambda + (n_i-1)) b_i = A + B - a_i, \quad i=1,2,\ldots,r.
\end{equation}
Setting $K:=K_{n_1-1,n_2,\ldots,n_{r-1},n_r+1}^w$ for short and using Rayleigh's principle, we deduce that
\begin{align*}
\lambda (K) & \geq \bm{x}^{\mathrm{T}} A(K) \bm{x} \\
& = 2\sum_{uv\in E(G)} x_ux_v + 2\sum_{uv\in E(K)\setminus E(G)} x_ux_v - 2\sum_{uv\in E(G)\setminus E(K)} x_ux_v \\
& = \lambda (G) + 2b_1 (a_1 + (n_1-2)b_1) - 2b_1(a_r + (n_r-1)b_r).
\end{align*}

In the following we will show $a_1 + (n_1-2)b_1 > a_r + (n_r-1)b_r$, and therefore $\lambda (K) > \lambda (G)$.
Towards this objective, note that 
\begin{align*}
a_1 + (n_1-2)b_1 - (a_r + (n_r-1)b_r)
& = \lambda b_r - (\lambda + 1) b_1
\end{align*}
by \eqref{eq:eq1}. Thus, in order to derive a contradiction, it is sufficient to show that $\lambda b_r - (\lambda + 1) b_1 > 0$.
According to \eqref{eq:eq1} and Claim \ref{claim:x-comp}, we see
\begin{equation}\label{eq:auxiliary}
(\lambda + (n_1-1)) b_1 - (\lambda + (n_r-1)) b_r = a_r-a_1 < \frac{20r}{\lambda \sqrt{n}}.
\end{equation}
Since $n_1-n_r\geq 2$, by Claim \ref{claim:x-comp} and \eqref{eq:auxiliary} we have
\begin{align*}
(\lambda + 1) b_1 
& < \frac{(\lambda+1)(\lambda+n_r-1)}{\lambda+n_1-1} b_r 
+ \frac{20r (\lambda + 1)}{\lambda (\lambda + n_1 - 1) \sqrt{n}} \\
& \leq \frac{(\lambda+1)(\lambda+n_1-3)}{\lambda+n_1-1} b_r 
+ \frac{20r (\lambda + 1)}{\lambda (\lambda + n_1 - 1) \sqrt{n}} \\
& = \lambda b_r - \frac{1}{\lambda+n_1-1} \bigg((\lambda-n_1+3) b_r - \frac{20r (\lambda + 1)}{\lambda \sqrt{n}}\bigg) \\
& < \lambda b_r.
\end{align*}
Hence, $\lambda (K) > \lambda (G)$. This contradiction implies that $n_1-n_r\leq 1$.

Finally, it remains to show that $w$ (the center of the star that is removed from $T_r(n)$) 
belongs to the vertex class of size $\lceil n/r\rceil$, which implies $G = T_r^{-}(n)$. 
If $n$ is divisible by $r$, there is no need to prove. If $n$ is not divisible by $r$, 
it suffices to show that $\lambda (T_r^{-}(n)) > \lambda (H)$, where $H$ is obtained 
from $T_r(n)$ by removing a star $S_r$ and the center of the star is in $V_r$. 
According to the proof above, $n_1=\lceil n/r\rceil$, $n_r=\lfloor n/r\rfloor$. 

To continue, we first introduce some notation. Let $\bm{y}$ and $\bm{z}$ denote the Perron vectors of
$G':=T_r^{-}(n)$ and $H$, respectively. For each $i=1,2,\ldots,r$, let $y_i$ and $z_i$ be the 
$\bm{y}$- and $\bm{z}$-components of the vertex $w_i$, respectively, and let $h_i$ and $k_i$ 
be the $\bm{y}$- and $\bm{z}$-components of the other vertices in $V_i$ rather than $w_i$.  

Using eigenvalue\,--\,eigenvector equation, $\lambda(G') (h_i - y_i) = y_1$ for $i=2,\ldots,r$.
It follows that 
\begin{equation}\label{eq:intermediate-1}
h_{i+1} - h_i = y_{i+1} - y_i, \quad i =2,\ldots,r-1.
\end{equation}
By \eqref{eq:intermediate-1} and eigenvalue\,--\,eigenvector equation,
\begin{align*}
(\lambda (G') + 1) (y_{i+1} - y_i) 
& = (n_i - 1) h_i - (n_{i+1} - 1) h_{i+1} \\
& \geq (n_{i+1} - 1) (h_i - h_{i+1}) \\
& = (n_{i+1} - 1) (y_i - y_{i+1}),
\end{align*}
which yields that $y_r\geq y_{r-1}\geq\cdots\geq y_2$. Likewise, we have
$z_1\leq z_2\leq\cdots\leq z_{r-1}$. It follows that 
\begin{align}\label{eq:lambda-G-H}
\begin{split}
(\lambda (G')-\lambda (H)) \bm{y}^{\mathrm{T}}\bm{z}
& = \bm{y}^{\mathrm{T}} (A(T_r^{-}(n)) - A(H)) \bm{z} \\
& = (y_r-y_1) (z_2+\cdots + z_{r-1}) - (z_1-z_r) (y_2+\cdots + y_{r-1}) \\
& \geq (r-2) \big((y_r - y_1) z_1 - (z_1 - z_r) y_r\big) \\
& = (r-2) y_1z_r \Big(\frac{y_r}{y_1} - \frac{z_1}{z_r} \Big).
\end{split}
\end{align}
To finish the proof, we need to bound $y_r/y_1$ and $z_1/z_r$. In view of \eqref{eq:intermediate-1}, 
we also have $h_r\geq h_{r-1}\geq\cdots\geq h_2$ and $k_1\leq k_2\leq\cdots\leq k_{r-1}$. Hence,
\begin{equation}\label{eq:intermediate-2}
\frac{y_r}{y_1} = \frac{\lambda (G') y_r}{\lambda (G') y_1} \geq \frac{\lambda (G') y_r}{(n-n_1-(r-1)) h_r}.
\end{equation}
Using the fact that $\lambda (G') (h_r - y_r) = y_1$, we obtain 
\begin{equation}\label{eq:intermediate-3}
\frac{\lambda (G') y_r}{h_r} = \lambda (G') - \frac{y_1}{h_r}.
\end{equation}
It follows from \eqref{eq:intermediate-2} and \eqref{eq:intermediate-3} that 
\begin{equation}\label{eq:intermediate-4}
\frac{y_r}{y_1} \geq \frac{\lambda (G') - \frac{y_1}{h_r}}{n-n_1-(r-1)} > 
\frac{\lambda (G') - 1 - \frac{25r}{\lambda (G')}}{n-n_1-(r-1)},
\end{equation}
where the last inequality follows from Claim \ref{claim:x-comp}. Similarly, 
\begin{align*}
\frac{z_1}{z_r} 
& = \frac{\lambda (H) z_1}{\lambda (H) z_r}
\leq \frac{\lambda (H) z_1}{(n-n_r-(r-1)) k_1}.
\end{align*}
Since $\lambda (H) (k_1 - z_1) = z_r$, we have 
\[
\frac{\lambda (H) z_1}{k_1} = \lambda (H) - \frac{z_r}{k_1}.
\]
Hence, we deduce that
\begin{equation}\label{eq:intermediate-5}
\frac{z_1}{z_r} \leq \frac{\lambda (H) - \frac{z_r}{k_1}}{n-n_r-(r-1)} < 
\frac{\lambda (H) - 1 + \frac{20r}{\lambda (H)}}{n-n_r-(r-1)}.
\end{equation}
Combining \eqref{eq:intermediate-4} and \eqref{eq:intermediate-5} with \eqref{eq:lambda-G-H} gives
\begin{equation}\label{eq:intermediate-6}
(\lambda (G')-\lambda (H)) \bm{y}^{\mathrm{T}}\bm{z}
> (r-2)y_1z_r \bigg(\frac{\lambda (G') - 1 - \frac{25r}{\lambda (G')}}{n - n_1 - (r-1)} - 
\frac{\lambda (H) - 1 + \frac{20r}{\lambda (H)}}{n - n_r - (r-1)}\bigg).
\end{equation}
By Claim \ref{claim:lower-bound-lambda}, $\lambda (G') > \lambda (T_r(n)) - 2r/n$. Since 
$\lambda (H) < \lambda (T_r(n))$, we get
\begin{align*}
\Big(\lambda (G') - 1 - \frac{25r}{\lambda (G')}\Big) 
- \Big(\lambda (H) - 1 + \frac{20r}{\lambda (H)}\Big)
& = \lambda (G') - \lambda (H) - 5r\Big(\frac{5}{\lambda (G')} + \frac{4}{\lambda (H)}\Big) \\
& > -\frac{2r}{n} - \frac{45r}{(r-1)n/r - 1} \\
& > -\frac{100r}{n}.
\end{align*}
Combining this with \eqref{eq:intermediate-6} and noting that $n_1 - n_r = 1$, we find
\begin{align*}
\frac{(\lambda (G')-\lambda (H)) \bm{y}^{\mathrm{T}}\bm{z}}{(r-2)y_1z_r}
& > \frac{\lambda (H) - 1 + \frac{20r}{\lambda (H)} - \frac{100r}{n}}{n - n_r - r} - \frac{\lambda (H) - 1 + \frac{20r}{\lambda (H)}}{n - n_r - (r-1)} \\
& = \frac{\big(\lambda (H) - 1 + \frac{20r}{\lambda (H)}\big) - \frac{100r}{n} (n-n_r-r+1)}{\big(n - n_r - (r-1)\big) (n - n_r - r)} \\
& > 0,
\end{align*}
which yields that $\lambda (G') > \lambda (H)$. This completes the proof of Theorem \ref{thm:co-connected-spectral}.
\hfill $\Box$ \par\vspace{2.5mm}

\section{Disproof of a conjecture of Gregory}

The following conjecture is originally due to Gregory, which was introduced by Cioab\u a \cite{Cioaba2021} 
on the ``Spectral Graph Theory online 2021" organized by Brazilian researchers.
 
\begin{conjecture}[Gregory, c.f.\,~\cite{Cioaba2021}]\label{conj:Gregoy}
Let $G$ be a connected graph with chromatic number $k\geq 2$ and $S$ be an independent set. Then
\[
\sum_{v\in S} x_v^2 \leq \frac{1}{2} - \frac{k-2}{2\sqrt{(k-2)^2 + 4(k-1)(n-k+1)}},
\]
where $x_v$ is the component of the Perron vector of $G$ with respect to the vertex $v$.
\end{conjecture}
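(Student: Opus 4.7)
The plan is to disprove Gregory's conjecture by exhibiting an explicit family of graphs for which the claimed inequality fails. A short computation shows that the complete multipartite graph $K_{n-k+1,1,\ldots,1}$ with $S$ chosen as the part of size $n-k+1$ realizes equality in the conjecture (its Perron vector has only two distinct values, and the resulting quadratic produces exactly the claimed right-hand side). So the strategy is to look for a graph whose Perron vector concentrates even more squared mass on a single independent set than this extremal multipartite graph does. A natural way is to start from a balanced bipartite graph — whose color classes already each carry one half of the squared mass — and raise the chromatic number to $3$ by inserting the smallest possible amount of extra structure.

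The concrete candidate to test first is $G = K_{s,t} + e$, the complete bipartite graph with parts $U,W$ of sizes $s,t$ plus a single extra edge $e = u_1u_2$ inside $U$. This graph has chromatic number exactly $3$ (the edge $e$ together with any vertex of $W$ forms a triangle, and the coloring assigning two colors to $\{u_1,u_2\}$, either of those to the remaining vertices of $U$, and the third color to $W$ is proper), and $S := W$ is an independent set of size $t$. By the symmetries $u_1\leftrightarrow u_2$, arbitrary permutations of $U\setminus\{u_1,u_2\}$, and arbitrary permutations of $W$, the Perron vector takes only three distinct values $\gamma,\alpha,\beta$ on these three orbits, and the eigenvalue equations collapse to $\gamma = t\beta/(\lambda-1)$, $\alpha = t\beta/\lambda$, together with the characteristic cubic
\[
\lambda^3 - \lambda^2 - st\lambda + (s-2)t = 0.
\]
Normalization then gives
\[
\sum_{v\in S} x_v^2 \;=\; t\beta^2 \;=\; \left(\frac{(s-2)t}{\lambda^2} + \frac{2t}{(\lambda-1)^2} + 1\right)^{-1}.
\]

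The final step is to compare this quantity with Gregory's bound at $k=3$, which is $\frac{1}{2} - \frac{1}{2\sqrt{1+8(n-2)}}$. Already the smallest nontrivial instance $s=t=3$ (so $n=6$) does the job: the cubic $\lambda^3 - \lambda^2 - 9\lambda + 3 = 0$ has largest root $\lambda \approx 3.39$, giving $\sum_{v\in S} x_v^2 \approx 0.433$, whereas Gregory's bound equals $\frac{1}{2} - \frac{1}{2\sqrt{33}} \approx 0.413$. After recording this explicit numerical violation, letting $s$ and $t$ grow proportionally shows that the gap persists for infinitely many $n$, ruling out any ``finitely many exceptions'' reading of the conjecture.

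The main obstacle is not producing the counterexample but formulating a clean modified conjecture that remains tight on $K_{n-k+1,1,\ldots,1}$ while also dominating the new family $K_{s,t}+e$ (and its higher-$k$ analogues in which the edge $e$ is replaced by a $K_{k-1}$ inside $U$, pushing the chromatic number up to $k$). Turning the cubic comparison into a clean closed-form inequality is the secondary technical nuisance; the cleanest path is probably to eliminate $\lambda$ via the cubic and reduce everything to a polynomial inequality in $s,t$. I expect the modified conjecture will have to replace the parameter $k-2$ in Gregory's formula by some additional structural invariant of $G$, for instance the size of the smallest color class in a proper $\chi$-coloring of $G$, which is precisely what distinguishes the two extremal families.
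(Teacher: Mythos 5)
Your disproof is correct, and it takes a genuinely different (and in one sense simpler) construction than the paper's. The formulas you derive for $K_{s,t}+e$ all check out: the three Perron orbits give $\gamma = t\beta/(\lambda-1)$, $\alpha = t\beta/\lambda$, the characteristic cubic $\lambda^3-\lambda^2-st\lambda+(s-2)t=0$, and
\[
\sum_{v\in W}x_v^2 \;=\; t\beta^2 \;=\; \Big(\tfrac{(s-2)t}{\lambda^2} + \tfrac{2t}{(\lambda-1)^2} + 1\Big)^{-1}.
\]
Plugging in $s=t=3$ gives $\lambda\approx 3.392$ and $\sum_{v\in W}x_v^2\approx 0.433$, which indeed exceeds Gregory's value $\tfrac12 - \tfrac{1}{2\sqrt{33}}\approx 0.413$, and a routine expansion with $s=t$ gives $\lambda = s + 1/s + O(1/s^2)$ and $\sum_{v\in W}x_v^2 = \tfrac12 - \Theta(1/n^2)$, which dominates Gregory's $\tfrac12 - \Theta(1/\sqrt{n})$ for all large $n$ as you claim. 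Your remark that $K_{n-k+1,1,\ldots,1}$ attains equality in Gregory's bound is also correct (the denominator factors as $D(D+k-2)/2$ with $D=\sqrt{(k-2)^2+4(k-1)(n-k+1)}$) and is a nice observation that the paper does not record.

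The paper's counterexample is constructed differently and is quantitatively sharper. It uses a graph $G_{n,k}$ obtained by putting a clique $K_{k-1}$ on $V_0$ and completely joining consecutive blocks in a chain $V_0-V_1-\cdots-V_t$, each block of size $k-1$, with $S$ the union of odd-indexed $V_i$. Using the trigonometric test vector $z_i=\sin\tfrac{(i+1)\pi}{t+2}$ the paper shows $2(k-1)-\lambda < (k-1)^3\pi^2/n^2$, hence that the Perron entry $x_0$ on the clique satisfies $x_0 < (k-1)^2\pi^2/n^{3/2}$; combined with the exact identity $\sum_{u\in S}x_u^2 = \tfrac12 - \binom{k-1}{2}x_0^2/\lambda$, this yields $\sum_{u\in S}x_u^2 = \tfrac12 - O(k^5/n^3)$, which is a closer approach to $\tfrac12$ than your $\Theta(1/n^2)$ at $k=3$ and, crucially, works uniformly for every $k\ge 2$. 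That sharper estimate is also what the paper uses to formulate its proposed modified conjecture. Your construction wins on having a six-vertex explicit counterexample, but you only sketch the general-$k$ variant (replacing the edge $e$ by a $K_{k-1}$ inside one side), whereas $G_{n,k}$ handles all $k$ from the start.
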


In 2010, Cioab\u{a} \cite{Cioaba2010} presented a necessary and sufficient condition for a 
connected graph to be bipartite in terms of its Perron vector $\bm{x}$: Let $S$ be an 
independent set of a connected graph $G$. Then $\sum_{v\in S} x_v^2 \leq 1/2$ with 
equality if and only if $G$ is bipartite having $S$ as one color class. If 
Conjecture \ref{conj:Gregoy} were proven to be true, it would generalize Cioab\u a's 
theorem to a general graph with given chromatic number. This conjecture was also 
listed as Conjecture 23 in \cite{Liu-Ning2023}.

In this section, we disprove Conjecture \ref{conj:Gregoy}. For this purpose, we need 
some additional notations. For a given integer $k$, let $n = (k-1)t + s$, where 
$0\leq s< k-1$. Let $G_{n,k}$ be an $n$-vertex graph with chromatic number $k$, 
whose vertex set can be partitioned into $(t+1)$ sets $V_0, V_1,\ldots, V_t$ such that
$|V_0|=\cdots =|V_{t-1}|=k-1$ and $|V_t|=s$. The edge set of $G_{n,k}$ is 
\[
E(G_{n,k}) = \{uv: u\in V_i, v\in V_{i+1}, i=0,1,\ldots,t-1\} \cup \{uv: u,v\in V_0\}.
\]
That is, $V_0$ forms a clique, while $V_1,V_2,\ldots, V_t$ are independent sets in 
$G_{n,k}$ (see Fig.\,\ref{fig:G-n-k}). 

\begin{figure}[htbp]
\centering
\begin{tikzpicture}[scale=0.9, line width=.6pt]
\coordinate (u1) at (0,0);
\coordinate (u2) at (2,0);
\coordinate (u3) at (3.6,0);
\coordinate (u4) at (6,0);
\coordinate (u5) at (7.6,0);

\coordinate (v2) at (2,0.6);
\coordinate (w2) at (2,-0.6);
\coordinate (v3) at (3.6,0.6);
\coordinate (w3) at (3.6,-0.6);
\coordinate (v4) at (6,0.6);
\coordinate (w4) at (6,-0.6);
\coordinate (v5) at (7.6,0.4);
\coordinate (w5) at (7.6,-0.4);

\filldraw[fill=cyan!25, draw=cyan!25] (u2) ellipse [x radius=0.5,y radius=0.95];
\filldraw[fill=cyan!25, draw=cyan!25] (u3) ellipse [x radius=0.5,y radius=0.95];
\filldraw[fill=cyan!25, draw=cyan!25] (u4) ellipse [x radius=0.5,y radius=0.95];
\filldraw[fill=magenta!25, draw=magenta!25] (u5) ellipse [x radius=0.5,y radius=0.8];

\draw (0.22,-0.9) -- (v2) -- (0,0.9);
\draw (0,-0.9) -- (w2) -- (0.22,0.9);
\draw (v2) -- (v3) -- (w2);
\draw (v2) -- (w3) -- (w2);
\draw (w5) -- (v4) -- (v5);
\draw (w5) -- (w4) -- (v5);

\node[rotate=90] at (u2) {$\cdots$}; 
\node[rotate=90] at (u3) {$\cdots$}; 
\node[rotate=90] at (u4) {$\cdots$}; 
\node[rotate=90] at (u5) {$\cdots$}; 
\node at ($(u3)!.5!(u4)$) {$\cdots$};

\filldraw[fill=blue!20, draw=blue!20] (u1) ellipse [x radius=0.7,y radius=0.95];

\filldraw[fill=blue!20, draw=blue!20] (-0.25,-1.7) rectangle (0.25,-1.2);
\filldraw[fill=cyan!25, draw=cyan!25] (1.75,-1.7) rectangle (2.25,-1.2);
\filldraw[fill=cyan!25, draw=cyan!25] (3.35,-1.7) rectangle (3.85,-1.2);
\filldraw[fill=cyan!25, draw=cyan!25] (5.65,-1.7) rectangle (6.35,-1.2);
\filldraw[fill=magenta!25, draw=magenta!25] (7.35,-1.7) rectangle (7.85,-1.2);

\node at (0,0) {\small $K_{k-1}$};
\node at (0,-1.45) {\small $V_0$};
\node at (2,-1.45) {\small $V_1$};
\node at (3.6,-1.45) {\small $V_2$};
\node at (6,-1.45) {\small $V_{t-1}$};
\node at (7.6,-1.45) {\small $V_t$};

\foreach \i in {v2,w2,v3,w3,v4,w4,v5,w5}
\filldraw (\i) circle (0.07);
\end{tikzpicture}
\caption{The graph $G_{n,k}$}
\label{fig:G-n-k}
\end{figure}
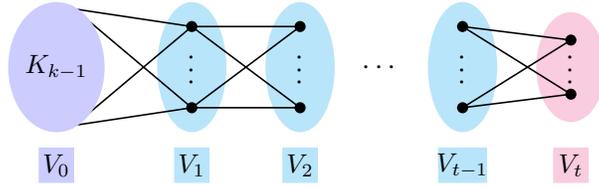

\begin{theorem}\label{thm:counterexample}
Let $n,k$ be two positive integers with $n\geq 2(k-1)$ and $G_{n,k}$ be the graph defined 
as above. There exists an independent set $S$ in $G_{n,k}$ such that 
\[
\sum_{u\in S} x_u^2 = \frac{1}{2} - O\Big(\frac{k^5}{n^3}\Big).
\]
\end{theorem}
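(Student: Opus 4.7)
The plan is to reduce the theorem to a single estimate on the boundary Perron component $y_0$, via two complementary independent sets combined with Cioab\u{a}'s bipartite upper bound. By the block-automorphism symmetry of $G_{n,k}$, the Perron vector $\bm{x}$ is constant on each $V_i$; write $y_i$ for this common value. Fix any $v_0\in V_0$ and consider
\[
S:=V_1\cup V_3\cup V_5\cup\cdots,\qquad S':=\{v_0\}\cup V_2\cup V_4\cup\cdots.
\]
Both are independent sets in $G_{n,k}$: non-consecutive blocks carry no edges, $V_0$ is omitted from $S$, and $S'$ contains only one vertex of the clique $V_0$. Since $S\cap S'=\varnothing$ and $S\cup S'$ covers every vertex except the $k-2$ vertices of $V_0\setminus\{v_0\}$,
\[
\sum_{v\in S}x_v^2+\sum_{v\in S'}x_v^2=1-(k-2)y_0^2.
\]
Cioab\u{a}'s theorem, applied to the independent set $S'$, gives $\sum_{v\in S'}x_v^2\leq\tfrac12$, and hence $\sum_{v\in S}x_v^2\geq\tfrac12-(k-2)y_0^2$. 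The task is thus reduced to proving $y_0^2=O(k^4/n^3)$.

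To estimate $y_0$, solve the eigenvalue system explicitly. Setting $\mu:=\lambda(G_{n,k})/(k-1)=2\cos\theta$ with $\theta\in(0,\pi)$ (valid since $\lambda<2(k-1)$ strictly when $n\geq 2(k-1)$), the interior recurrence $\mu y_i=y_{i-1}+y_{i+1}$ is solved by $y_i=C\sin((T-i)\theta)$ for constants $C,T$; the clique-end boundary $(\mu-(k-2)/(k-1))y_0=y_1$ and the tail-end boundary (obtained by eliminating $y_t$ via $\lambda y_t=(k-1)y_{t-1}$) pin down $T=t-1+2(k-1)/(2(k-1)-s)+O(\theta^2)$ and the transcendental equation $\tan(T\theta)=\sin\theta/\bigl((k-2)/(k-1)-\cos\theta\bigr)$. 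Writing $T\theta=\pi-\phi$ with $\phi>0$ small, a short Taylor expansion gives $\phi=(k-1)\theta+O(\theta^3 k^3)$, hence $\theta=\Theta(k/n)$ and $\phi=\Theta(k^2/n)$. Perron--Frobenius positivity selects this branch: all $y_i>0$ precisely when $T\theta\in(0,\pi)$ at the first crossing.

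Finally, $y_0=C\sin(T\theta)=C\sin\phi$, and the normalisation $\sum_i |V_i|\,y_i^2=1$ together with the elementary estimate $\sum_{j=1}^{t}\sin^2(j\theta)=t/2+O(k)$ (obtained by summing the geometric series for $\sum e^{2ij\theta}$ and using $|\sin(t\theta)|=\sin\phi$) yields $C^2=O(1/n)$. Consequently $y_0^2=O(C^2\phi^2)=O(k^4/n^3)$, so $(k-2)y_0^2=O(k^5/n^3)$, which substituted into the reduction completes the proof. The main technical obstacle is the trigonometric bookkeeping — in particular, tracking the boundary offset $2(k-1)/(2(k-1)-s)$ induced by the partial block $V_t$ and keeping the small-$\theta$ Taylor remainders uniform in $k$ and $n$ throughout the interesting range $n\gtrsim k^{5/3}$; the complementary range $n\lesssim k^{5/3}$ is vacuous, since there $k^5/n^3=\Omega(1)$ and the bound is automatic from $\sum_{v\in S}x_v^2\geq 0$.
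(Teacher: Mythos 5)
Your argument is correct in its broad strokes but takes a genuinely different route from the paper's. You obtain a \emph{lower} bound $\sum_{v\in S}x_v^2\geq\frac12-(k-2)y_0^2$ by pairing $S$ with the complementary independent set $S'=\{v_0\}\cup V_2\cup V_4\cup\cdots$ and invoking Cioab\u{a}'s bipartite bound on $S'$; combined with Cioab\u{a} applied to $S$ itself for the upper bound, this sandwiches the error by $(k-2)y_0^2$. The paper instead gets an \emph{exact} identity in one line: since the only edges of $G_{n,k}$ missing from $\bigcup_{u\in S}\{uv:v\in N(u)\}$ are those inside the clique $V_0$, the eigenequation gives $\lambda\sum_{u\in S}x_u^2=\lambda/2-\binom{k-1}{2}x_0^2$, so $\sum_{u\in S}x_u^2=\frac12-\binom{k-1}{2}\frac{x_0^2}{\lambda}$. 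This bypasses Cioab\u{a} entirely and produces a slightly sharper constant. For the second half, you solve the one-dimensional eigenvector recurrence explicitly (transfer-matrix ansatz, transcendental boundary equation, Taylor expansion in $\theta$), whereas the paper avoids any explicit solution: Claim 5.1 bounds $2(k-1)-\lambda$ from above with a sine test vector in the Rayleigh quotient, and Claim 5.2 then reads off $x_0$ from the one-line identity $\sum_u(\Delta-d(u))x_u=(\Delta-\lambda)\sum_u x_u$ (a ``first-moment'' trick using $\bm 1^\mathrm{T}A\bm x=\lambda\bm1^\mathrm{T}\bm x$), since the left side collapses to essentially $(k-1)x_0$. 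That variational argument is considerably shorter and sidesteps the uniformity issue you flag.

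On that uniformity issue, your dismissal of the complementary range is not quite in the right place: the Taylor expansion $\phi=(k-1)\theta+O(k^2\theta^3)$ requires $(k-1)\theta^2\ll 1$, i.e.\ roughly $n\gg k^2$, but the range you declare vacuous is only $n\lesssim k^{5/3}$. That leaves $k^{5/3}\lesssim n\lesssim k^2$ unaccounted for. It can be patched trivially --- from the normalisation $C^2=O(1/n)$ and $y_0=C\sin\phi\leq C$ one has $y_0^2=O(1/n)$, and $1/n\leq k^4/n^3$ precisely when $n\leq k^2$ --- but as written there is a gap. Also, the claim $\sum_{j=1}^{t}\sin^2(j\theta)=t/2+O(k)$ only holds because $t\theta\approx\pi$, which you should state; for generic $\theta$ with $t\theta=O(1)$ the Dirichlet-kernel remainder is $O(t)$, not $O(k)$. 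Neither of these affects the final order of magnitude, but both should be made explicit if the explicit-eigenvector route is pursued.
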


\begin{proof}
Let $\bm{x}$ be the Perron vector of $G_{n,k}$, and set $\lambda: = \lambda(G_{n,k})$ 
for brevity. Our proof hinges on the following two claims.

\begin{claim}\label{claim:order-G-n-k}
$2(k-1) - \lambda < \frac{(k-1)^3 \pi^2}{n^2}$.
\end{claim}

\noindent {\it Proof of Claim \ref{claim:order-G-n-k}}.
Define a vector $\bm{z}\in\mathbb{R}^n$ for $G_{n,k}$ such that the components
of $\bm{z}$ within each $V_i$ are equal and denoted by $z_i$, where
\begin{equation}\label{eq:xv}
z_i = \sin\frac{(i+1)\pi}{t+2}, \quad i=0,1,\ldots,t.
\end{equation}
Let $\Delta$ be the maximum degree of $G_{n,k}$. Then $\Delta = 2(k-1)$. 
By simple arguments we find 
\[
(\Delta - \lambda) \cdot \|\bm{z}\|_2^2 
\leq \sum_{v\in V(G_{n,k})} (\Delta - d(v)) z_v^2 + \sum_{uv\in E(G_{n,k})} (z_u - z_v)^2.
\]
Below we shall estimate the terms of the right-hand side of the above inequality.
To find the first term, note that
\begin{align*}
\sum_{v\in V(G_{n,k})} (\Delta - d(v)) z_v^2 
& = (k-1) z_0^2 + s(k-1) z_t^2 \\
& = (k-1)(s+1) z_0^2 \\
& \leq (k-1)^2 z_0^2,
\end{align*}
where the second equality follows from $z_0=z_t$, and the last inequality uses the fact $s\leq k-2$. 
To find the second term, note that
\begin{align*}
\sum_{uv\in E(G_{n,k})} (z_u - z_v)^2
& = s(k-1) (z_{t-1} - z_t)^2 + (k-1)^2 \sum_{j=0}^{t-2} (z_j - z_{j+1})^2 \\
& < (k-1)^2 \sum_{j=0}^{t-1} (z_j - z_{j+1})^2.
\end{align*}
Combining the above three inequalities, gives
\[
(\Delta - \lambda) \cdot \|\bm{z}\|_2^2 
< (k-1)^2 z_0^2 + (k-1)^2 \sum_{j=0}^{t-1} (z_j - z_{j+1})^2.
\]
It follows from $\|\bm{z}\|_2^2 > (k-1)\sum_{j=0}^{t-1} z_j^2$ that 
\begin{equation}\label{eq:Delta-lambda}
\Delta - \lambda 
< \frac{(k-1) z_0^2 + (k-1) \sum_{j=0}^{t-1} (z_j - z_{j+1})^2}{\sum_{j=0}^{t-1} z_j^2}.
\end{equation}

Next, we proceed to evaluate the terms in \eqref{eq:Delta-lambda}. By \eqref{eq:xv},
\begin{align*}
\sum_{j=0}^{t-1} (z_j - z_{j+1})^2 
& = 4\sin^2\frac{\pi}{2(t+2)} \sum_{j=0}^{t-1} \cos^2 \frac{(2j+3)\pi}{2(t+2)} \\
& = \sin^2\frac{\pi}{2(t+2)} \sum_{j=0}^{t-1} \big(\e^{ (2j+3)\pi\imgi/(2t+4) } + \e^{ -(2j+3)\pi\imgi/(2t+4) }\big)^2 \\
& = \sin^2\frac{\pi}{2(t+2)} \sum_{j=0}^{t-1} \big(\e^{ (2j+3)\pi\imgi/(t+2) } + \e^{ -(2j+3)\pi\imgi/(t+2) } + 2\big) \\
& = 2\Big(t - 2\cos\frac{\pi}{t+2}\Big) \sin^2 \frac{\pi}{2(t+2)}.
\end{align*}
This implies that 
\begin{equation}\label{eq:square-yj}
\sum_{j=0}^{t-1} (z_j - z_{j+1})^2 < 2(t-1)\cdot \sin^2 \frac{\pi}{2(t+2)}.
\end{equation}
On the other hand, 
\begin{align*}
\sum_{j=0}^{t-1} z_j^2 
& = \sum_{j=0}^{t-1} \sin^2 \frac{(j+1)\pi}{t+2} \\
& = \frac{t}{2} - \frac{1}{4} \sum_{j=0}^{t-1} \big( \e^{2(j+1)\pi\imgi/(t+2)} + \e^{-2(j+1)\pi\imgi/(t+2)} \big) \\
& = \frac{t+1}{2} + \frac{\cos\frac{2\pi}{t+2}}{2},
\end{align*}
which yields that 
\begin{equation}\label{eq:sum-square-yj}
\sum_{j=0}^{t-1} z_j^2 > \frac{t+1}{2}.
\end{equation}
Substituting \eqref{eq:square-yj} and \eqref{eq:sum-square-yj} into \eqref{eq:Delta-lambda} gives
\[
\Delta - \lambda 
< \frac{(k-1) z_0^2 + 2(t-1)(k-1) \sin^2 \frac{\pi}{2(t+2)}}{(t+1)/2}.
\]
Combining this inequality with $z_0=\sin\frac{\pi}{t+2}$, we obtain
\begin{align*}
\Delta - \lambda 
& < 2(k-1)\cdot \frac{\sin^2\frac{\pi}{t+2} + 2(t-1) \sin^2 \frac{\pi}{2(t+2)}}{t+1} \\
& = 4(k-1) \cdot \frac{\sin^2\frac{\pi}{2(t+2)} \cdot \Big( 2\cos^2 \frac{\pi}{2(t+2)} + (t-1)\Big)}{t+1} \\
& < 4(k-1) \sin^2\frac{\pi}{2(t+2)} \\
& < \frac{(k-1)^3 \pi^2}{n^2}.
\end{align*}
This completes the proof of the claim.
\hfill $\diamondsuit$ \par\vspace{2mm}

By symmetry, the vertices in $V_0$ have the same component of $\bm{x}$, denoted by $x_0$. 

\begin{claim}\label{claim:a}
$x_0 < \frac{(k-1)^2 \pi^2}{n^{3/2}}$.
\end{claim}

\noindent {\it Proof of Claim \ref{claim:a}}.
Since $A(G_{n,k}) \bm{x} = \lambda \bm{x}$, we have 
$\bm{1}^{\mathrm{T}} A(G_{n,k}) \bm{x} = \lambda \bm{1}^{\mathrm{T}} \bm{x}$, 
where $\bm{1}$ is the all-ones vector. Hence,
\[
\sum_{u\in V(G_{n,k})} d(u) x_u = \lambda \sum_{u\in V(G_{n,k})} x_u.
\]
This is equivalent to 
\begin{equation}\label{eq:identity}
\sum_{u\in V(G_{n,k})} \big(2(k-1) - d(u)\big) x_u 
= (2(k-1) - \lambda) \sum_{u\in V(G_{n,k})} x_u.
\end{equation}
Note that the degree of vertex in $V_0$ is $2k-3$. By \eqref{eq:identity} and Cauchy--Schwarz inequality,
\begin{align*}
(k-1) x_0 & < \sum_{u\in V(G_{n,k})} \big(2(k-1) - d(u)\big) x_u \\
& = (2(k-1) - \lambda) \sum_{u\in V(G_{n,k})} x_u \\
& \leq (2(k-1) - \lambda)\sqrt{n}.
\end{align*}
It follows from Claim \ref{claim:order-G-n-k} that
\[
(k-1) x_0 < \frac{(k-1)^3 \pi^2}{n^{3/2}},
\]
completing the proof of Claim \ref{claim:a}.
\hfill $\diamondsuit$ \par\vspace{2mm}

Now, let 
\[ 
S := \bigcup_{i=1}^{\lceil t/2\rceil} V_{2i-1}.
\]
Obviously, $S$ is an independent set in $G_{n,k}$. For any vertex $u\in S$, by eigenvalue\,--\,eigenvector 
equation, we have 
\[
\lambda x_u^2 = \sum_{v\in N(u)} x_ux_v.
\]
Summing this equation for all $u\in S$, we get
\begin{align*}
\lambda \sum_{u\in S} x_u^2
& = \sum_{u\in S} \sum_{v\in N(u)} x_ux_v \\
& = \sum_{ij\in E(G_{n,k})} x_ix_j - \sum_{i,j\in V_0,\, i\neq j} x_ix_j \\
& = \frac{\lambda}{2} - \sum_{i,j\in V_0,\, i\neq j} x_ix_j,
\end{align*}
which implies that 
\begin{equation}\label{eq:sum-x-u-square}
\sum_{u\in S} x_u^2 = \frac{1}{2} - \frac{1}{\lambda} \sum_{i,j\in V_0,\, i\neq j} x_ix_j. 
\end{equation}

Finally, combining \eqref{eq:sum-x-u-square} and Claim \ref{claim:a} implies 
\[
\sum_{u\in S} x_u^2 = \frac{1}{2} - \binom{k-1}{2}\frac{x_0^2}{\lambda} = \frac{1}{2} - O\Big(\frac{k^5}{n^3}\Big),
\]
completing the proof of Theorem \ref{thm:counterexample}.
\end{proof}

\section{Concluding remarks}

In this paper, we present two variations of spectral Tur\'an theorems. The first one strengthens 
a classical result of Nikiforov by offering a more precise  upper bound on the spectral radius 
of graphs. The second variation provides a spectral analogue of Ore's problem, determining the 
unique graph that attains the maximum spectral radius $\spex_{cc}(n, K_{r+1})$ among all 
$n$-vertex co-connected graphs without $K_{r+1}$ as a subgraph. Generally, it would be 
interesting to determine $\spex_{cc}(n, F)$ for other $F$. 

In addition, we also disprove a conjecture of Gregory regarding the Perron vector of connected 
graphs with a given chromatic number. We conclude this paper with a modified version of Gregory's conjecture.

\begin{conjecture}
Let $G$ be a connected graph of order $n$ with chromatic number $k\geq 2$. 
Suppose that $S$ is an independent set, and $\bm{x}$ is the Perron vector of $G$. Then
\[
\sum_{v\in S} x_v^2 < \frac{1}{2} - O\Big(\frac{k^5}{n^3}\Big).
\]
\end{conjecture}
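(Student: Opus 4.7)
The plan is to start from the identity, valid for any connected graph $G$, independent set $S$, and Perron vector $\bm{x}$:
\[
\sum_{v\in S} x_v^2 \;=\; \frac{1}{2} \;-\; \frac{1}{\lambda(G)} \sum_{uv \in E(G[T])} x_u x_v, \qquad T := V(G)\setminus S,
\]
derived by summing $\lambda x_v = \sum_{u\sim v} x_u$ over $v\in S$ and comparing with $\bm{x}^{\mathrm{T}} A(G)\bm{x} = \lambda$. This is essentially the identity \eqref{eq:sum-x-u-square} from the proof of Theorem \ref{thm:counterexample}. The conjecture therefore reduces to the lower bound
\[
\sum_{uv \in E(G[T])} x_u x_v \;\geq\; \Omega\!\left(\frac{k^5}{n^3}\right)\cdot \lambda(G).
\]

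Since $\chi(G) = k$ and $S$ is independent, we have $\chi(G[T]) \geq k-1$; hence $G[T]$ contains a $(k-1)$-critical subgraph $H$ with at least $k-1$ vertices, minimum degree at least $k-2$ in $H$, and $e(H)\geq \binom{k-1}{2}$. The remaining task is to show that the Perron components on $V(H)$ cannot all be too small: quantitatively, one would hope to prove an average lower bound of the shape $x_u x_v \gtrsim k^3/n^3$ on typical edges $uv\in E(H)$. Combined with $\binom{k-1}{2}$ edges and the trivial $\lambda(G) \leq n-1$, this delivers the target rate.

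To produce such a componentwise lower bound, the plan is to mimic the two claims used in Section 5. The analogue of Claim \ref{claim:a} would come from summing the eigenequation against the all-ones vector,
\[
\sum_{u \in V(G)} \bigl(\Delta(G) - d(u)\bigr) x_u \;=\; \bigl(\Delta(G) - \lambda(G)\bigr) \sum_{u\in V(G)} x_u,
\]
together with the Cauchy--Schwarz bound $\sum_u x_u \leq \sqrt{n}$. The analogue of Claim \ref{claim:order-G-n-k} would then bound $\Delta(G) - \lambda(G)$ from above by a Rayleigh-quotient estimate using a sine-wave trial vector modelled on $z_i = \sin\frac{(i+1)\pi}{t+2}$, supported on a path-like skeleton of $(k-1)$-critical subgraphs inside $G$ that reaches $H$.

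The main obstacle is the construction of that path-like skeleton for a \emph{general} connected graph with $\chi(G) = k$. In the specific graph $G_{n,k}$ the skeleton is built into the definition, which is precisely what makes the argument of Theorem \ref{thm:counterexample} go through. For arbitrary $G$, one would need a structural dichotomy: either $G$ admits a canonical decomposition into $\Theta(n/k)$ nearly disjoint $(k-1)$-critical pieces joined in a path-like manner --- so that the sine-wave vector applies essentially verbatim --- or $G$ is already far from the extremal family $G_{n,k}$, in which case a cruder spectral argument (for instance by applying Theorem \ref{thm:Spectral-radius-clique} to $G[T]$ and combining with Hoffman-type bounds) recovers the bound with room to spare. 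Establishing this dichotomy with the tight constants needed to recover the exponent $k^5/n^3$ is the principal technical challenge, and is where genuinely new ideas appear to be required.
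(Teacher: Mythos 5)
There is nothing to compare against: the statement you are attempting is the paper's closing \emph{conjecture} (a proposed replacement for Gregory's conjecture after the authors disprove the original in Theorem \ref{thm:counterexample}); the paper offers no proof, and your proposal does not supply one either. The parts you do establish are fine --- the identity $\sum_{v\in S}x_v^2=\tfrac12-\tfrac{1}{\lambda}\sum_{uv\in E(G[T])}x_ux_v$ with $T=V(G)\setminus S$ is correct (it generalizes \eqref{eq:sum-x-u-square}), as is the observation that $\chi(G[T])\ge k-1$, so $G[T]$ contains a $(k-1)$-critical subgraph $H$ with at least $\binom{k-1}{2}$ edges. But the entire content of the conjecture is the lower bound $\sum_{uv\in E(G[T])}x_ux_v\ge c\,(k^5/n^3)\,\lambda$, and you explicitly leave that open (``genuinely new ideas appear to be required''). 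So this is a reduction plus a research programme, not a proof.

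Two concrete problems with the programme itself. First, the quantitative bookkeeping does not close even heuristically: in the extremal graph $G_{n,k}$ one has $\lambda\approx 2(k-1)$, $H=K_{k-1}$, and per-edge products $x_0^2=\Theta(k^4/n^3)$, so the deficit $\tfrac{1}{\lambda}\binom{k-1}{2}x_0^2=\Theta(k^5/n^3)$ arises only because $\lambda=\Theta(k)$; your accounting ($x_ux_v\gtrsim k^3/n^3$ per edge together with the ``trivial'' $\lambda\le n-1$) yields only $\Theta(k^5/n^4)$, a factor of $n$ short. Second, and more seriously, no polynomial lower bound on $\sum_{uv\in E(G[T])}x_ux_v$ can hold for general connected $k$-chromatic graphs: attach a $k$-chromatic gadget to a large complete bipartite graph by a pendant path of length $\ell$. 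The Perron vector concentrates on the bipartite part and decays like $\lambda^{-\ell}$ along the path, so the Perron mass on every $(k-1)$-critical subgraph of $G[T]$ --- and hence the deficit $\tfrac12-\sum_{v\in S}x_v^2$ --- is exponentially small in $\ell$. This shows your proposed intermediate inequality is false as stated, and indeed that the conjecture can only be read in the literal $O(\cdot)$ sense (strict inequality, with $k^5/n^3$ as the optimal achievable rate from Theorem \ref{thm:counterexample}), not as a uniform $\tfrac12-\Omega(k^5/n^3)$ bound; any correct attack must be restructured around that reading.
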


\noindent {\bfseries Note added.} After we finished the first version of this manuscript \cite{Liu-Ning2023-2}, 
Casey Tompkins brought to our attention a related paper \cite{MT23}, in which the authors generalized 
several classical theorems in extremal combinatorics by replacing a global
constraint with a local constraint, and obtained many beautiful theorems.

\end{document}